\newtheorem{theorem}{Theorem}[section]
\newtheorem*{theorem A}{Theorem A}
\newtheorem*{theorem B}{N\"olker's Theorem}
\newtheorem{lemma}{Lemma}[section]
\newtheorem{proposition}{Proposition}[section]
\theoremstyle{remark}
\newtheorem{remark}{Remark}[section]
\theoremstyle{remark}
\theoremstyle{definition}
\newtheorem{definition}{Definition}[section]
\newtheorem{example}{Example}[section]
\numberwithin{equation}{section}
\def \({\left ( }
\def \){\right )}
\def \<{\left < }
\def \>{\right >}
\begin{document}
\title{Lightlike Hypersurfaces of Metallic Semi-Riemannian Manifolds}
\author{B\.{I}lal Eftal ACET}
\address{Faculty of Arts and Sciences, Department of Mathematics, Ad\i yaman
University,\ Ad\i yaman, TURKEY}
\email{eacet@adiyaman.edu.tr}
\subjclass[2010]{ 53C15, 53C25, 53C35}
\date{}
\keywords{Metallic structure, Lightlike hypersurface, Screen semi-invariant
lightlike hypersurface.}

\begin{abstract}

In our article, we introduce and study lightlike hypersurfaces 
of a metallic semi-Riemannian manifold. We
examine some geometric properties of invariant lightlike hypersurfaces. We show that the induced structure on an invariant lightlike
hypersurface is also metallic. We also define screen semi-invariant lightlike hypersurfaces, 
investigate integrability conditions for the distributions and give some examples.
\end{abstract}

\maketitle

\section{Introduction}

In terms of differential geometry, submanifold theory has an attraction for
geometers. One of the most important topic is the theory of lightlike
submanifolds. A submanifold of a semi-Riemann manifold is called a lightlike
submanifold if the induced metric is degenerate. So, geometry of lightlike
submanifold is very different from the non-degenerate submanifold. The
general view of lightlike submanifold has been introduced in \cite{DB}.
Later, K. L. Duggal and B. \c{S}ahin have been developed many new classes of
lightlike submanifolds on indefinite Kaehler \cite{DS3}, indefinite Sasakian 
\cite{DS1} manifolds and different applications of lightlike submanifolds 
\cite{DS}. On this subject, some applications of the theory of mathematical
physics, especially electromagnetisms \cite{DB}, black hole theory \cite{DS}
and general relativity \cite{G}, is inspired. Many studies on \ lightlike
submanifolds have been reported by many geometers (see \cite{APK3, APK1,
SBE,FBR-1, FBR, BC, FC} and the references therein).

Investigating submanifold theory on manifolds endowed with various geometric
structures provides a fruitful study field. Recently, Riemannian manifolds
with metallic structures are widely studied and metallic structures on
Riemannian manifolds provides many geometric results to characterize a
submanifold of such ambient manifolds.

As a generalization of the golden mean, in 2002, V. W. de Spinadel \cite%
{V.W4} introduced metallic means family which contains the silver mean, the
bronze mean, the copper mean and the nickel mean etc. For some positive
integer $p$ and $q,$ the positive solution of 
\begin{equation*}
x^{2}-px-q=0,
\end{equation*}%
is called a $(p,q)$-metallic number \cite{V.W1} which has the expression%
\begin{equation*}
\sigma _{p,q}=\frac{p+\sqrt{p^{2}+4q}}{2}.
\end{equation*}%
It is well-known that we have the golden mean $\phi =\frac{1+\sqrt{5}}{2}$,
for $p=q=1$ and the silver mean $\sigma _{2,1}=1+\sqrt{2}$, for $p=2,$ $q=1.$
The metallic mean family plays an important role to establish a relationship
between mathematics and architecture. For example silver and golden mean can
be seen in the sacred art of India, Egypt, China, Turkey and different
ancient civilizations \cite{HR-31}.

Polynomial structures on manifolds were introduced by S. I. Goldberg, K.
Yano and N. C. Petridis in \cite{G-Yano, G-Petri}. As a particular case of
polynomial structures which is called golden structure was defined in \cite%
{CR-HR-Chaos-2008, CR-HR-MUN} and some generalizations of this called
metallic structure in \cite{HR-CR-Revista}. Being inspired by the metallic
mean, the term of metallic manifold was studied in \cite{HR-CR-Revista} by a 
$(1,1)$-tensor field $\tilde{J}$ on $\tilde{N}$ which satisfies $\tilde{J}%
^{2}=p\tilde{J}+qI$, where $p$, $q$ are fixed positive integer numbers and $%
I $ is the identity operator on $\tilde{N}$. Moreover, if $(\tilde{N},\tilde{%
g})$ is a Riemannian manifold endowed with a metallic structure $\tilde{J}$
such that the Riemannian metric $\tilde{g}$ is $\tilde{J}$-compatible, i.e., 
$\tilde{g}(\tilde{J}X,Y)=\tilde{g}(X,\tilde{J}Y),$ for any $X,Y\in X(\tilde{N%
})$, then $(\tilde{g},\tilde{J})$ is called metallic Riemannian structure
and $(\tilde{N},\tilde{g},\tilde{J})$ is a metallic Riemannian manifold.
Metallic structure on the ambient Riemannian manifold provides important
geometrical results on the submanifolds, since it is an important tool while
investigating the geometry of submanifolds. Invariant, anti-invariant,
semi-invariant, slant and semi-slant submanifolds of a metallic Riemannian
manifold are studied in \cite{Adara-1, Adara-2, Adara-3} and the authors
obtained important characterizations on submanifolds of metallic Riemannian
manifolds.

One of the most important subclasses of metallic Riemannian manifolds is the
golden Riemannian manifolds. Many authors have studied golden Riemannian
manifolds and their submanifolds in recent years (see \cite{CR-HR-MUN,
Gezer-Cengiz, FC3, E1}. N. Poyraz \"{O}nen and E. Ya\c{s}ar \cite{NE}
initiated the study of lightlike geometry in Golden semi-Riemannian
manifolds, by investigating lightlike hypersurfaces of a golden
semi-Riemannian manifold.

Motivated by the studies on submanifolds of metallic Riemannian manifolds
and lightlike submanifolds of semi-Riemannian manifolds, in this paper we
introduce lightlike hypersurfaces of a metallic semi-Riemannian manifold.
Section 1 is devoted to preliminaries containing basic definitions for
metallic semi-Riemannian manifolds and lightlike hypersurfaces. Section 3 is
divided two subsections. Firstly we define invariant lightlike hypersurfaces
of a metallic semi-Riemannian manifold and prove that induced structures on
the invariant hypersurfaces are also metallic. In Subsection 2 we examine
screen semi-invariant lightlike hypersurfaces, give examples and investigate
integrability conditions for the distributions defined in such hypersurfaces.

\section{Preliminaries}

The positive solution of%
\begin{equation}
x^{2}-px-q=0,  \label{1}
\end{equation}%
is called member of the metallic means family \cite{V.W4} , where $p$, $q$
are fixed positive integers. These numbers denoted by;%
\begin{equation}
\sigma _{p,q}=\frac{p+\sqrt{p^{2}+4q}}{2},  \label{2}
\end{equation}%
are known $(p,q)$-metallic numbers.

A polynomial structure on a semi-Riemannian manifold $\tilde{N}$ is known
metallic if it is determined by $\tilde{J}$ such that 
\begin{equation}
\tilde{J}^{2}=p\tilde{J}+qI.  \label{3}
\end{equation}%
If a semi-Riemannian metric $\tilde{g}$ satisfies the equation%
\begin{equation}
\tilde{g}(U,\tilde{J}V)=\tilde{g}(\tilde{J}U,V),  \label{4}
\end{equation}%
which yields%
\begin{equation}
\tilde{g}(\tilde{J}U,\tilde{J}V)=p\tilde{g}(U,\tilde{J}V)+q\tilde{g}(U,V),
\label{5}
\end{equation}%
then $\tilde{g}$ is called $\tilde{J}$-compatible.

\begin{definition}
A semi-Riemannian manifold $(\tilde{N},\tilde{g})$ equipped with $\tilde{J}$
such that the semi-Riemannian metric $\tilde{g}$ is $\tilde{J}$-compatible
is named a metallic semi-Riemannian manifold and $(\tilde{g},\tilde{J})$ is
called a metallic semi-Riemannian structure on $\tilde{N}$.
\end{definition}

\begin{example}
\label{ex-1} Let $\tilde{N}=%
\mathbb{R}
_{3}^{7}$ be a semi-Euclidean space with coordinate system $%
(x_{1},x_{2},...,x_{7})$ and signature $(-,+,-,+,-,+,+)$.

Taking%
\begin{equation*}
\tilde{J}(x_{1},x_{2},...,x_{7})=((p-\sigma )x_{1},\sigma x_{2},(p-\sigma
)x_{3},\sigma x_{4},(p-\sigma )x_{5},\sigma x_{6},\sigma x_{7}),
\end{equation*}%
then we can see that 
\begin{equation*}
\tilde{J}^{2}=p\tilde{J}+qI.
\end{equation*}%
Therefore $\tilde{J}$ is a metallic structure on $\tilde{N}$.
\end{example}

\begin{remark}
Putting $p=1=q$ in (\ref{5}) then $(\tilde{g},\tilde{J})$ is a golden
(semi)-Riemannian structure (see \cite{HC}, \cite{E1}).
\end{remark}

A polynomial structure on $\tilde{N}$ defined by a smooth tensor field of
type $(1,1)$ induces a generalized almost product structure $\tilde{F}$, $%
\tilde{F}^{2}=I,$ on $\tilde{N}$.

\begin{proposition}
\cite{HR-CR-Revista} Every almost product structure $\tilde{F}$ induces two
metallic structure on $\tilde{N}$ given as follow:%
\begin{equation}
\tilde{J}_{1}=\frac{p}{2}I+\left( \frac{2\sigma _{p,q}-p}{2}\right) \tilde{F}%
,\text{ \ \ \ \ }\tilde{J}_{2}=\frac{p}{2}I-\left( \frac{2\sigma _{p,q}-p}{2}%
\right) \tilde{F}.  \label{6}
\end{equation}%
Conversely, every metallic structure $\tilde{J}$ on $\tilde{N}$ induces two
almost product structures:%
\begin{equation}
\tilde{F}=\pm \left( \frac{2}{2\sigma _{p,q}-p}\tilde{J}-\frac{p}{2\sigma
_{p,q}-p}I\right) .  \label{7}
\end{equation}
\end{proposition}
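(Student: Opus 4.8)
The plan is to prove the two directions of the Proposition by direct algebraic verification, treating the tensor identities as polynomial relations in the $(1,1)$-tensor fields $\tilde{F}$ and $\tilde{J}$. I will use throughout that $\sigma = \sigma_{p,q}$ is a root of $x^2 - px - q = 0$, so that $\sigma^2 = p\sigma + q$, which is the key numerical fact forcing the coefficients to balance.

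For the forward direction, I start with an almost product structure $\tilde{F}$, so $\tilde{F}^2 = I$, and I must check that each $\tilde{J}_i$ in \eqref{6} satisfies $\tilde{J}_i^2 = p\tilde{J}_i + qI$. Writing $\tilde{J}_1 = \frac{p}{2}I + \left(\frac{2\sigma - p}{2}\right)\tilde{F}$ and squaring, I would expand
\begin{equation*}
\tilde{J}_1^2 = \frac{p^2}{4}I + 2\cdot\frac{p}{2}\cdot\frac{2\sigma-p}{2}\tilde{F} + \left(\frac{2\sigma-p}{2}\right)^2\tilde{F}^2,
\end{equation*}
then substitute $\tilde{F}^2 = I$ and collect the coefficient of $I$ and of $\tilde{F}$ separately. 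The plan is to show the $\tilde{F}$-coefficient of $\tilde{J}_1^2$ equals $p$ times the $\tilde{F}$-coefficient of $\tilde{J}_1$, and that the $I$-coefficients satisfy the analogous relation, where the constant $q$ is absorbed precisely because $\left(\frac{2\sigma-p}{2}\right)^2 = \frac{4\sigma^2 - 4p\sigma + p^2}{4} = \frac{4(p\sigma+q) - 4p\sigma + p^2}{4} = \frac{p^2+4q}{4}$ using the defining relation for $\sigma$. The computation for $\tilde{J}_2$ is identical with $\tilde{F}$ replaced by $-\tilde{F}$, and since $\tilde{F}^2 = (-\tilde{F})^2 = I$ the same verification applies verbatim.

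For the converse, I begin with a metallic structure $\tilde{J}$ satisfying \eqref{3} and set $\tilde{F} = \pm\left(\frac{2}{2\sigma - p}\tilde{J} - \frac{p}{2\sigma - p}I\right)$ as in \eqref{7}; I must verify $\tilde{F}^2 = I$. Squaring the bracketed expression gives a combination of $\tilde{J}^2$, $\tilde{J}$, and $I$; I would replace $\tilde{J}^2$ by $p\tilde{J} + qI$ and then collect terms. The plan is to show the coefficient of $\tilde{J}$ vanishes and the coefficient of $I$ reduces to $1$; the denominator $(2\sigma-p)^2 = p^2 + 4q$ (again by $\sigma^2 = p\sigma + q$) is exactly what cancels the numerator, and the overall sign is irrelevant since $\tilde{F}^2 = (\pm\tilde{F})^2$. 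It is also worth remarking that \eqref{6} and \eqref{7} are inverse to each other, so the two constructions are genuinely reciprocal.

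The calculations here are entirely routine once the relation $\sigma^2 = p\sigma + q$ is invoked, so there is no real conceptual obstacle; the only point demanding care is the consistent bookkeeping of the coefficients $\frac{p}{2}$ and $\frac{2\sigma-p}{2}$ and their squares, together with making sure the factor $2\sigma - p = \sqrt{p^2+4q} \neq 0$ so that \eqref{7} is well-defined (this is guaranteed since $p,q$ are positive integers). I expect the main step to be recognizing at each stage that $2\sigma - p = \sqrt{p^2 + 4q}$ and $(2\sigma - p)^2 = p^2 + 4q$, which is the single identity that makes every coefficient collapse correctly.
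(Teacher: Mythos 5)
Your verification is correct: both directions follow from $\sigma_{p,q}^2 = p\sigma_{p,q} + q$, equivalently $(2\sigma_{p,q}-p)^2 = p^2+4q$, exactly as you compute, and the nonvanishing of $2\sigma_{p,q}-p=\sqrt{p^2+4q}$ for positive $p,q$ makes (\ref{7}) well defined. The paper itself gives no proof of this proposition (it is quoted from the cited reference), and your direct algebraic expansion is the standard argument, so there is nothing to reconcile.
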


Let $\tilde{N}$ be a semi-Riemannian manifold with index $q$, $0<q<n+1,$ and 
$\acute{N}$ be a hypersurface of $\tilde{N}$, with $g=\tilde{g}\mid _{\acute{%
N}}$. Then $\acute{N}$ is a lightlike hypersurface of $\tilde{N},$ if the
metric $g$ is of rank $n$ and the orthogonal complement $T\acute{N}^{\bot }$
of $T\acute{N}$, given as 
\begin{equation*}
T\acute{N}^{\bot }=\bigcup_{p\in \acute{N}}\{V_{p}\in T_{p}\tilde{N}%
:g_{p}(U_{p},V_{p})=0,\forall U\in \Gamma (T_{p}\acute{N})\},
\end{equation*}%
is a distribution of rank $1$ on $\acute{N}$ \cite{DB}. $T\acute{N}^{\bot
}\subset T\acute{N}$ and then it coincides with the radical distribution $%
RadT\acute{N}=T\acute{N}\cap T\acute{N}^{\bot }$.

A complementary bundle of $T\acute{N}^{\bot }$ in $T\acute{N}$ is a
non-degenerate distribution of constant rank $n-1$ over $\acute{N},$ which
is known a\textit{\ screen distribution} and demonstrated with $S(T\acute{N}%
) $.

\begin{theorem}
\label{theo-1}\cite{DB} Let $(\acute{N},g,S(T\acute{N}))$ be a lightlike
hypersurface of a semi-Riemannian manifold $\tilde{N}$. Then there exists a
unique rank $1$ vector sub-bundle $ltr(T\acute{N})$ of $T\tilde{N}$, with
base space $N$, such that for every non-zero section $E$ of $RadT\acute{N}$
on a coordinate neighbourhood $\wp \subset \acute{N}$, there exists a
section $N$ of $ltr(T\acute{N})$ on $\wp $ satisfying:%
\begin{equation*}
\tilde{g}(N,N)=0,\text{ \ \ \ \ }\tilde{g}(N,W)=0,\text{ \ \ \ \ }\tilde{g}%
(N,E)=1,\text{ \ for }W\in \Gamma (S(T\acute{N})\mid _{\wp }.
\end{equation*}%
$ltr(T\acute{N})$ is called \textit{the lightlike transversal vector bundle}
of $\acute{N}$.
\end{theorem}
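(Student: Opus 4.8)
The plan is to reduce the statement to a pointwise linear-algebra construction inside the orthogonal complement of the screen distribution, and then to check that this construction patches together into a rank $1$ subbundle. First I would form the vector bundle $S(T\acute{N})^{\bot}$, the $\tilde{g}$-orthogonal complement of $S(T\acute{N})$ in $T\tilde{N}\mid _{\acute{N}}$. Since $S(T\acute{N})$ is non-degenerate of rank $n-1$ and $\dim \tilde{N}=n+1$, this complement has rank $2$ and is itself non-degenerate, with $T\tilde{N}\mid _{\acute{N}}=S(T\acute{N})\oplus _{\bot }S(T\acute{N})^{\bot }$. Because $RadT\acute{N}=T\acute{N}^{\bot }$ is orthogonal to all of $T\acute{N}$, in particular to $S(T\acute{N})$, the radical line lies inside $S(T\acute{N})^{\bot }$; as it is lightlike while $S(T\acute{N})^{\bot }$ is non-degenerate, each fibre of $S(T\acute{N})^{\bot }$ must carry an inner product of signature $(1,1)$.

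Next, fixing a non-zero section $E$ of $RadT\acute{N}$ over $\wp $, I would choose a local section $V$ of $S(T\acute{N})^{\bot }$ not proportional to $E$, so that $\{E,V\}$ frames $S(T\acute{N})^{\bot }$ on $\wp $. The crucial observation is that $\tilde{g}(E,V)\neq 0$: otherwise $V$ would be orthogonal to the null vector $E$ inside a $2$-dimensional Lorentzian space, forcing $V$ to be a multiple of $E$, a contradiction. Writing the desired section as $N=\alpha E+\beta V$, the requirement $\tilde{g}(N,W)=0$ for $W\in \Gamma (S(T\acute{N})\mid _{\wp })$ is automatic since $N\in S(T\acute{N})^{\bot }$; the normalization $\tilde{g}(N,E)=1$ together with $\tilde{g}(E,E)=0$ forces $\beta =1/\tilde{g}(E,V)$, after which the null condition $\tilde{g}(N,N)=0$ determines $\alpha $ uniquely. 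Thus exactly one $N$ arises, and I would then verify it is genuinely transversal, i.e. $N\notin T\acute{N}$, using that $N$ is null, orthogonal to $S(T\acute{N})$, and pairs non-trivially with $E$.

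Finally, I would set $ltr(T\acute{N})\mid _{\wp }=\mathrm{span}\{N\}$ and check consistency. Uniqueness of $N$ for the given $E$ immediately yields uniqueness of the line it spans; replacing $E$ by $fE$ for a nowhere-zero function $f$ replaces $N$ by $f^{-1}N$, so the spanned line, hence the bundle $ltr(T\acute{N})$, is independent of the choice of $E$, and the local pieces glue to a globally defined rank $1$ subbundle of $T\tilde{N}$ over $\acute{N}$. The relations $\tilde{g}(N,N)=\tilde{g}(N,W)=0$ and $\tilde{g}(N,E)=1$ then hold by construction. I expect the main obstacle to be the structural step establishing that $S(T\acute{N})^{\bot }$ is non-degenerate of rank $2$ with the radical as a null line inside it, since everything afterward follows cleanly from the $(1,1)$ normal form of the null pair $\{E,N\}$; some care is also needed to ensure a smooth, and not merely pointwise, choice of $V$ so that $\alpha $ and $\beta $ are smooth functions on $\wp $.
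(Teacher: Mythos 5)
Your argument is correct and is essentially the standard construction from Duggal--Bejancu, which this paper only cites rather than proves: one passes to the rank-$2$ non-degenerate bundle $S(T\acute{N})^{\bot}$ containing the null line $RadT\acute{N}$, picks a complementary local section $V$ (noting $\tilde{g}(E,V)\neq 0$), and solves for $N=\alpha E+\beta V$, recovering the usual formula $N=\frac{1}{\tilde{g}(V,E)}\bigl(V-\frac{\tilde{g}(V,V)}{2\tilde{g}(V,E)}E\bigr)$. The uniqueness, transversality, and gluing steps you outline are exactly the ones needed, so there is no gap to report.
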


By the previous theorem, we can state:%
\begin{equation}
T\acute{N}=S(T\acute{N})\bot RadT\acute{N},  \label{8}
\end{equation}%
\begin{eqnarray}
T\tilde{N} &=&T\acute{N}\oplus ltr(T\acute{N})  \notag \\
&=&S(T\acute{N})\bot \{RadT\acute{N}\oplus ltr(T\acute{N})\}.  \label{9}
\end{eqnarray}%
Let $\omega :\Gamma (T\acute{N})\rightarrow \Gamma (S(T\acute{N}))$ be the
projection morphism. For $U,V\,\in \Gamma (T\acute{N}),$ we have 
\begin{equation}
\tilde{\nabla}_{U}V=\nabla _{U}V+B(U,V)N,  \label{10}
\end{equation}%
\begin{equation}
\tilde{\nabla}_{V}N=-A_{N}U+\tau (U)N,  \label{11}
\end{equation}%
\begin{equation}
\nabla _{U}\omega V=\nabla _{U}^{\ast }\omega V+C(U,\omega V)E,  \label{12}
\end{equation}%
\begin{equation}
\nabla _{U}E=-A_{E}^{\ast }U-\tau (V)E.  \label{13}
\end{equation}

For the induced connection $\nabla ,$ we have 
\begin{equation}
(\nabla _{U}g)(V,Z)=B(U,Z)\theta (V)+B(U,V)\theta (Z),  \label{15}
\end{equation}%
where $\theta $ is a differential $1$-form and 
\begin{equation}
\theta (U)=\tilde{g}(N,U).  \label{16}
\end{equation}

Also note that 
\begin{equation}
B(U,E)=0,  \label{14}
\end{equation}%
\begin{equation}
g(A_{E}^{\ast }U,PV)=B(U,PV),\ \ \ g\ (A_{E}^{\ast }U,N)=0,  \label{18*}
\end{equation}%
\begin{equation}
g(A_{N}U,PV)=C(U,PV)\text{, \ \ \ }g(A_{N}U,N)=0,\text{\ }  \label{19}
\end{equation}%
\begin{equation}
A_{E}^{\ast }E=0.  \label{17}
\end{equation}

\section{LIGHTLIKE HYPERSURFACES OF\ METALLIC SEMI-RIEMANNIAN MANIFOLDS}

Let $\acute{N}$ be a lightlike hypersurface of a metallic semi-Riemannian
manifold $(\tilde{N},\tilde{g},\tilde{J})$. For every $U\in \Gamma (T\acute{N%
})$ and $N\in \Gamma (ltr(T\acute{N}))$, we get%
\begin{equation}
\tilde{J}U=\varphi U+u(U)N,  \label{27}
\end{equation}%
\begin{equation}
\tilde{J}N=\xi +v(E)N,  \label{28}
\end{equation}%
where $\varphi U$, $\xi \in \Gamma (T\acute{N}),$ and $u$, $v$ are 1-forms
given by%
\begin{equation}
u(U)=g(U,\tilde{J}E),\text{ \ \ \ \ \ \ \ }v(U)=g(U,\tilde{J}N).  \label{29}
\end{equation}

\begin{lemma}
Let $\acute{N}$ be a lightlike hypersurface of $(\tilde{N},\tilde{g},\tilde{J%
})$. Then we have 
\begin{equation}
\varphi ^{2}U=p\varphi U+q(U)-u(U)\xi ,  \label{30}
\end{equation}%
\begin{equation}
u(\varphi U)=pu(U)-u(U)v(E),  \label{31}
\end{equation}%
\begin{equation}
\varphi \xi =p\xi -v(E)\xi ,  \label{32}
\end{equation}%
\begin{equation}
v(E)^{2}=pv(E)+q-u(\xi ),  \label{33}
\end{equation}%
\begin{equation}
g(\varphi U,V)=g(U,\varphi V)+u(V)\theta (U)-u(U)\theta (V),  \label{34}
\end{equation}%
\begin{eqnarray}
g(\varphi U,\varphi V) &=&pg(U,\varphi V)+qg(U,V)+pu(V)\theta (U)  \label{35}
\\
&&-u(V)g(\varphi U,N)-u(U)g(\varphi V,N).  \notag
\end{eqnarray}
\end{lemma}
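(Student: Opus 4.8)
The plan is to obtain the four purely algebraic identities \eqref{30}--\eqref{33} by applying $\tilde{J}$ to the defining relations \eqref{27} and \eqref{28} and exploiting $\tilde{J}^{2}=p\tilde{J}+qI$, and to obtain the two metric identities \eqref{34}--\eqref{35} directly from the $\tilde{J}$-compatibility \eqref{4}--\eqref{5}. Throughout I would lean on the direct-sum decomposition \eqref{9}, which lets me read off the tangential ($T\acute{N}$) and lightlike-transversal ($N$) components separately, together with the normalization $\tilde{g}(N,N)=0$ and the relation $\theta(U)=\tilde{g}(N,U)$ from \eqref{16}.

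First, for \eqref{30} and \eqref{31}, I would apply $\tilde{J}$ to \eqref{27}. On one side $\tilde{J}^{2}U=p\tilde{J}U+qU=p\varphi U+qU+p\,u(U)N$; on the other, $\tilde{J}(\varphi U+u(U)N)=\varphi^{2}U+u(\varphi U)N+u(U)\xi+u(U)v(E)N$, where I have used \eqref{27} on the tangent vector $\varphi U$ and \eqref{28} on $N$. Comparing the $T\acute{N}$-parts yields \eqref{30} and comparing the coefficients of $N$ yields \eqref{31}. Similarly, applying $\tilde{J}$ to \eqref{28} and writing $\tilde{J}\xi=\varphi\xi+u(\xi)N$, the tangential and transversal parts of $\tilde{J}^{2}N=p\tilde{J}N+qN$ produce \eqref{32} and \eqref{33} respectively.

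For the metric identities I would start from compatibility. Since $\tilde{g}(\tilde{J}U,V)=\tilde{g}(U,\tilde{J}V)$, substituting \eqref{27} into both sides and using $g=\tilde{g}\mid_{\acute{N}}$ together with $\theta(V)=\tilde{g}(N,V)$ gives $g(\varphi U,V)+u(U)\theta(V)=g(U,\varphi V)+u(V)\theta(U)$, which rearranges to \eqref{34}. For \eqref{35} I would feed \eqref{27} into the quadratic compatibility \eqref{5}: expanding $\tilde{g}(\tilde{J}U,\tilde{J}V)$ gives $g(\varphi U,\varphi V)$ plus the cross terms $u(V)g(\varphi U,N)+u(U)g(\varphi V,N)$, the $u(U)u(V)\tilde{g}(N,N)$ term dropping out since $\tilde{g}(N,N)=0$; expanding the right-hand side $p\,\tilde{g}(U,\tilde{J}V)+q\,\tilde{g}(U,V)$ gives $p\,g(U,\varphi V)+p\,u(V)\theta(U)+q\,g(U,V)$, and equating the two expressions is exactly \eqref{35}.

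The computations are routine once this framework is fixed; the only genuine point of care is the component bookkeeping. I must ensure that $\varphi U$ and $\xi$ truly lie in $T\acute{N}$ (as guaranteed by \eqref{27} and \eqref{28}), so that the splitting coming from \eqref{9} is legitimate and the tangential/transversal comparison is valid, and that the single scalar $v(E)$ rather than a full $1$-form correctly records the $N$-component of $\tilde{J}N$. Invoking $\tilde{g}(N,N)=0$ at the right moment in the derivation of \eqref{35} is the one place where a stray term could otherwise slip in.
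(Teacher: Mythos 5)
Your proof is correct and is exactly the intended derivation: the paper states this lemma without any proof, and your argument --- applying $\tilde{J}$ to \eqref{27} and \eqref{28}, comparing components along the splitting $T\tilde{N}=T\acute{N}\oplus ltr(T\acute{N})$ to get \eqref{30}--\eqref{33}, then substituting \eqref{27} into the compatibility relations \eqref{4} and \eqref{5} with $\tilde{g}(N,N)=0$ to get \eqref{34}--\eqref{35} --- is the standard one and checks out line by line. Note that your bookkeeping also silently corrects the typo in \eqref{30}, where $q(U)$ should read $qU$.
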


\begin{definition}
A metallic semi-Riemannain structure $\tilde{J}$ is called a locally
metallic structure if $\tilde{J}$ is parallel, i.e., $\tilde{\nabla}\tilde{J}%
=0.$
\end{definition}

\begin{lemma}
Let $\acute{N}$ be a lightlike hypersurface of a locally metallic
semi-Riemannian manifold $(\tilde{N},\tilde{g},\tilde{J})$. Then we have 
\begin{equation}
(\nabla _{U}\varphi )V=u(V)A_{N}U+B(U,V)\xi ,  \label{36}
\end{equation}%
\begin{equation}
(\nabla _{U}u)V=B(U,V)v(E)-B(U,\varphi V)-\tau (U)u(V),  \label{37}
\end{equation}%
\begin{equation}
\nabla _{U}\xi =-\varphi A_{N}U+\tau (U)\xi +A_{N}Uv(E),  \label{38}
\end{equation}%
\begin{equation}
U(v(E))=-B(U,\xi )-u(A_{N}U).  \label{39}
\end{equation}
\end{lemma}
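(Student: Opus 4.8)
The plan is to exploit the hypothesis $\tilde\nabla\tilde J=0$, which for a locally metallic structure means $\tilde\nabla_U(\tilde J W)=\tilde J(\tilde\nabla_U W)$ for every vector field $W$, and to feed into this identity the two structure equations \eqref{27} and \eqref{28} together with the Gauss--Weingarten formulas \eqref{10} and \eqref{11}. The entire argument is a component-splitting computation: after expanding each side of the parallelism relation in the decomposition $T\tilde N=T\acute N\oplus ltr(T\acute N)$, I read off the $T\acute N$-part and the $N$-part separately. The four displayed identities then emerge in two pairs, \eqref{36}--\eqref{37} from differentiating \eqref{27} and \eqref{38}--\eqref{39} from differentiating \eqref{28}.

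First I would treat \eqref{36} and \eqref{37}. Applying $\tilde\nabla_U$ to $\tilde J V=\varphi V+u(V)N$, the left-hand side equals $\tilde J(\tilde\nabla_U V)=\tilde J(\nabla_U V+B(U,V)N)$ by \eqref{10} and parallelism; expanding with \eqref{27} applied to $\nabla_U V$ and with \eqref{28} gives
\begin{equation*}
\tilde J(\tilde\nabla_U V)=\varphi(\nabla_U V)+B(U,V)\xi+\big(u(\nabla_U V)+B(U,V)v(E)\big)N.
\end{equation*}
Differentiating the right-hand expression $\varphi V+u(V)N$ directly, using \eqref{10} for $\tilde\nabla_U(\varphi V)$ and \eqref{11} for $\tilde\nabla_U N$, yields
\begin{equation*}
\nabla_U(\varphi V)-u(V)A_N U+\big(B(U,\varphi V)+U(u(V))+\tau(U)u(V)\big)N.
\end{equation*}
Equating the $T\acute N$-components gives $\nabla_U(\varphi V)-\varphi(\nabla_U V)=u(V)A_N U+B(U,V)\xi$, which is \eqref{36} once $(\nabla_U\varphi)V$ is recognised on the left. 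Equating the $N$-components and rearranging, with $(\nabla_U u)V=U(u(V))-u(\nabla_U V)$, produces \eqref{37}.

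Then I would repeat the identical scheme starting from \eqref{28}. Applying $\tilde\nabla_U$ to $\tilde J N=\xi+v(E)N$, I compute the left-hand side as $\tilde J(\tilde\nabla_U N)=\tilde J(-A_N U+\tau(U)N)$ via \eqref{11}, expanding the image through \eqref{27} and \eqref{28}; on the right I expand $\tilde\nabla_U\xi$ by \eqref{10} and $\tilde\nabla_U N$ by \eqref{11}, together with the product rule for the scalar factor $v(E)$. Splitting into components, the $T\acute N$-part gives $\nabla_U\xi=-\varphi A_N U+\tau(U)\xi+v(E)A_N U$, i.e. \eqref{38}, and the $N$-part gives $U(v(E))=-B(U,\xi)-u(A_N U)$, i.e. \eqref{39}. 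The only point requiring care — and the sole potential obstacle — is the bookkeeping of this component separation: one must keep track that $\varphi W$ and $\xi$ lie in $\Gamma(T\acute N)$ and that the shape-operator terms $A_N U$ are tangential, while the second-fundamental-form terms $B(\cdot,\cdot)N$ and the terms $\tau(U)N$ are transversal, so that no cross-contamination occurs between the two slots. Once this is tracked correctly, all four identities follow immediately.
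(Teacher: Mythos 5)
Your computation is correct: differentiating \eqref{27} and \eqref{28} with $\tilde{\nabla}\tilde{J}=0$, substituting the Gauss and Weingarten formulas \eqref{10}--\eqref{11}, and separating the $T\acute{N}$- and $ltr(T\acute{N})$-components does yield all four identities \eqref{36}--\eqref{39}, and I verified that each sign and each term (in particular the $v(E)A_{N}U$ term in \eqref{38} and the $-u(A_{N}U)$ term in \eqref{39}) comes out as stated. The paper states this lemma without proof, and your argument is precisely the standard one the author intends, so there is nothing to compare beyond noting that your bookkeeping of tangential versus transversal parts is exactly right.
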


\subsection{Invariant Lightlike Hypersurfaces}

\begin{definition}
Let $\acute{N}$ be a lightlike hypersurface of a locally metallic
semi-Riemannian manifold $(\tilde{N},\tilde{g},\tilde{J})$. In that case $%
\acute{N}$ is called invariant hypersurface of $\tilde{N}$ if%
\begin{equation}
\begin{array}{c}
\tilde{J}(RadT\acute{N})=RadT\acute{N}, \\ 
\tilde{J}(ltr(T\acute{N}))=ltr(T\acute{N}).%
\end{array}
\label{40}
\end{equation}
\end{definition}

\begin{example}
\label{ex-2}Let $\tilde{N}=%
\mathbb{R}
_{2}^{5}$ be a semi-Euclidean space with coordinate system $%
(x_{1},x_{2},x_{3},x_{4},x_{5})$ and signature $(-,+,-,+,+)$. Taking%
\begin{equation*}
\tilde{J}(x_{1},x_{2},x_{3},x_{4},x_{5})=(\sigma x_{1},\sigma x_{2},\sigma
x_{3},\sigma x_{4},\sigma x_{5}),
\end{equation*}%
then we can see that 
\begin{equation*}
\tilde{J}^{2}=p\tilde{J}+qI,
\end{equation*}%
which implies $\tilde{J}$ is a metallic structure on $\tilde{N}.$

Now, consider a hypersurface $\acute{N}$ of $\tilde{N}$ with 
\begin{equation*}
x_{1}=\sigma x_{5}.
\end{equation*}%
Then $T\acute{N}$ of $\acute{N}$ is spanned by%
\begin{equation*}
\Phi _{1}=\frac{\partial }{\partial x_{2}},\text{ \ \ \ \ }\Phi _{2}=\frac{%
\partial }{\partial x_{3}},
\end{equation*}%
\begin{equation*}
\Phi _{3}=\frac{\partial }{\partial x_{4}},\text{ \ \ \ \ \ \ }\Phi
_{4}=\sigma \frac{\partial }{\partial x_{1}}+\frac{\partial }{\partial x_{5}}%
,
\end{equation*}%
So, $RadT\acute{N}$ and $ltr(T\acute{N})$ are given by%
\begin{equation*}
RadT\acute{N}=Sp\{E=\sigma \frac{\partial }{\partial x_{2}}+\sigma \frac{%
\partial }{\partial x_{3}}\},
\end{equation*}%
and%
\begin{equation*}
ltr(T\acute{N})=Sp\{N=\frac{1}{2\sigma ^{2}}(\sigma \frac{\partial }{%
\partial x_{2}}-\sigma \frac{\partial }{\partial x_{3}})\}.
\end{equation*}%
Thus we arrive at%
\begin{equation*}
\tilde{J}E=\sigma E\text{ \ \ \ \ \ \ and \ \ \ \ \ \ \ }\tilde{J}\acute{N}%
=\sigma N,
\end{equation*}%
which shows that $\acute{N}$ is an invariant lightlike hypersurface$.$
\end{example}

\begin{theorem}
Let $\acute{N}$ be a lightlike hypersurface of a locally metallic
semi-Riemannian manifold $(\tilde{N},\tilde{g},\tilde{J})$. Then the
structure $\varphi $ is a metallic structure on $\acute{N}$.
\end{theorem}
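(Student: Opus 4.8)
The plan is to verify the two defining properties of a metallic semi-Riemannian structure for the induced tensor field $\varphi$: the algebraic relation $\varphi^{2}=p\varphi+qI$ on $T\acute{N}$, and the compatibility $g(\varphi U,V)=g(U,\varphi V)$ of $\varphi$ with the induced metric $g$ (cf. equations (\ref{3}) and (\ref{4})). Since $\varphi$ already takes values in $\Gamma(T\acute{N})$ by its defining decomposition (\ref{27}), it is a genuine $(1,1)$-tensor field on $\acute{N}$, so only these two identities remain to be checked.

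The key step is to show that the $1$-form $u$ vanishes identically, and for this I use invariance. The condition $\tilde{J}(RadT\acute{N})=RadT\acute{N}$ forces $\tilde{J}E\in\Gamma(RadT\acute{N})$. As $RadT\acute{N}=T\acute{N}\cap T\acute{N}^{\bot}\subset T\acute{N}^{\bot}$, the vector field $\tilde{J}E$ is $g$-orthogonal to every $U\in\Gamma(T\acute{N})$. Recalling $u(U)=g(U,\tilde{J}E)$ from (\ref{29}), I conclude $u(U)=0$ for all $U$, i.e. $u\equiv 0$.

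With $u\equiv 0$ in hand, both identities fall out of the Lemma by direct substitution. Plugging $u\equiv 0$ into (\ref{30}) kills the term $-u(U)\xi$ and yields $\varphi^{2}U=p\varphi U+qU$, which is precisely the metallic relation. Likewise, substituting $u\equiv 0$ into (\ref{34}) removes both $u(V)\theta(U)$ and $u(U)\theta(V)$, leaving $g(\varphi U,V)=g(U,\varphi V)$, the $\varphi$-compatibility of $g$. Together these show that $\varphi$ is a metallic structure on $\acute{N}$.

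I expect no serious obstacle: the whole argument hinges on the single observation that invariance makes $\tilde{J}E$ radical, hence $g$-orthogonal to the tangent bundle, so $u$ collapses. The only points requiring a little care are confirming that $\varphi$ is tangent-valued (immediate from (\ref{27})) and making sure the compatibility is tested against the induced metric $g$ rather than $\tilde{g}$; once $u\equiv 0$ is secured, the metallic equation and compatibility are just substitutions into the already-established formulas (\ref{30}) and (\ref{34}). It is worth noting that without invariance the residual term $-u(U)\xi$ in (\ref{30}) obstructs the metallic relation, so the invariance hypothesis is genuinely used here.
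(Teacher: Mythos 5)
Your proof is correct and follows essentially the same route as the paper: establish $u\equiv 0$ from invariance and then read off the metallic relation and the $g$-compatibility from (\ref{30}) and (\ref{34}). The only (welcome) difference is that you justify $u\equiv 0$ explicitly from the definition (\ref{40}), via $\tilde{J}E\in\Gamma(RadT\acute{N})\subset T\acute{N}^{\bot}$, whereas the paper simply asserts the equivalence of invariance with $\tilde{J}U=\varphi U$.
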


\begin{proof}
$\acute{N}$ is an invariant lightlike hypersurface iff%
\begin{equation*}
\tilde{J}U=\varphi U.
\end{equation*}%
So we get 
\begin{equation}
u(U)=0.  \label{41}
\end{equation}%
By use of (\ref{30}) and (\ref{34}), we arrive at%
\begin{equation*}
\varphi ^{2}U=p\varphi U+qU
\end{equation*}%
and 
\begin{equation*}
g(\varphi U,V)=g(U,\varphi V).
\end{equation*}
\end{proof}

\begin{theorem}
Let $\acute{N}$ be an invariant lightlike hypersurface of a locally metallic
semi-Riemannian manifold $(\tilde{N},\tilde{g},\tilde{J})$. Then 
\begin{equation}
i)\text{ }B(U,\tilde{J}V)=B(\tilde{J}U,V)=\tilde{J}B(U,V),  \label{4.12}
\end{equation}%
\begin{equation}
ii)\text{ }B(\tilde{J}U,\tilde{J}V)=pB(U,\tilde{J}V)+qB(U,V).  \label{4.13}
\end{equation}
\end{theorem}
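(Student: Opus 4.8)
The plan is to read off everything from the invariance hypothesis together with the structure equation (\ref{37}) that is already available from the previous Lemma. First I would record the consequences of $\acute{N}$ being invariant: by the proof of the preceding theorem one has $\tilde{J}U=\varphi U$ and $u(U)=0$ for every $U\in\Gamma(T\acute{N})$; since $\tilde{J}(ltr(T\acute{N}))=ltr(T\acute{N})$, equation (\ref{28}) forces $\xi=0$ and $\tilde{J}N=v(E)N$; and applying $\tilde{J}^{2}=p\tilde{J}+qI$ to $N$ (equivalently, (\ref{33}) with $u(\xi)=0$) shows that the scalar $v(E)$ is a metallic number, $v(E)^{2}=pv(E)+q$. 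I would also note that $B$ is symmetric, which follows since $B(U,V)=\tilde{g}(\tilde{\nabla}_{U}V,E)$ and $\tilde{g}([U,V],E)=0$ because $[U,V]\in\Gamma(T\acute{N})$ and $E\perp T\acute{N}$.

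For part (i), the key observation is that on an invariant hypersurface $u\equiv0$, so $(\nabla_{U}u)V=U(u(V))-u(\nabla_{U}V)=0$. Substituting this into (\ref{37}) and using $\varphi V=\tilde{J}V$ immediately yields
\begin{equation*}
B(U,\tilde{J}V)=v(E)\,B(U,V).
\end{equation*}
Since $B$ is symmetric, interchanging $U$ and $V$ gives $B(\tilde{J}U,V)=v(E)\,B(U,V)$ as well. Finally, because $\tilde{J}$ acts on the line bundle $ltr(T\acute{N})$ as multiplication by $v(E)$, i.e. $\tilde{J}\big(B(U,V)N\big)=v(E)B(U,V)N$, the common value $v(E)B(U,V)$ is exactly what the symbol $\tilde{J}B(U,V)$ denotes, and the triple equality (\ref{4.12}) follows. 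As a cross-check, one can instead derive $B(U,\tilde{J}V)=v(E)B(U,V)$ directly from $\tilde{\nabla}\tilde{J}=0$ by expanding $\tilde{\nabla}_{U}(\tilde{J}V)=\tilde{J}(\tilde{\nabla}_{U}V)$ with (\ref{10}) and comparing the $N$-components.

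For part (ii), I would simply iterate (i). Applying (\ref{4.12}) first in the second argument and then in the first,
\begin{equation*}
B(\tilde{J}U,\tilde{J}V)=v(E)\,B(\tilde{J}U,V)=v(E)^{2}\,B(U,V).
\end{equation*}
Now replacing $v(E)^{2}$ by $pv(E)+q$ and rewriting $v(E)B(U,V)=B(U,\tilde{J}V)$ via (\ref{4.12}) gives $B(\tilde{J}U,\tilde{J}V)=pB(U,\tilde{J}V)+qB(U,V)$, which is (\ref{4.13}).

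I expect the only delicate points to be bookkeeping ones rather than genuine obstacles: establishing $\xi=0$ and $\tilde{J}N=v(E)N$ cleanly from the invariance condition, and fixing the interpretation of the symbol $\tilde{J}B(U,V)$ as multiplication by the metallic eigenvalue $v(E)$ on the transversal line bundle. Once these are settled, the algebra is forced by (\ref{37}), the symmetry of $B$, and the quadratic relation $v(E)^{2}=pv(E)+q$.
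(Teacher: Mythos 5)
Your proposal is correct and follows essentially the same route as the paper: the identity $B(U,\tilde{J}V)=v(E)\,B(U,V)$ that you extract from (\ref{37}) with $u\equiv 0$ is exactly what the paper obtains by comparing the transversal components of the two expansions of $\tilde{\nabla}_{U}\tilde{J}V$ (your stated cross-check), and part (ii) is in both cases an iteration of (i) combined with $\tilde{J}^{2}=p\tilde{J}+qI$. If anything, you are more explicit than the paper about the points it leaves implicit, namely the symmetry of $B$ needed for $B(U,\tilde{J}V)=B(\tilde{J}U,V)$ and the reading of $\tilde{J}B(U,V)$ as multiplication by the metallic scalar $v(E)$ on $ltr(T\acute{N})$.
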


\begin{proof}
$i)$ Since $\acute{N}$ is an invariant lightlike hypersurface of a locally
metallic semi-Riemannian manifold $\tilde{N}$, then 
\begin{equation}
\tilde{\nabla}_{U}\tilde{J}V=\nabla _{U}\tilde{J}V+B(U,\tilde{J}V)N,
\label{4.14}
\end{equation}%
and 
\begin{equation}
\tilde{\nabla}_{U}\tilde{J}V=\tilde{J}(\nabla _{U}V)+B(U,V)\tilde{J}N.
\label{4.15}
\end{equation}%
From (\ref{4.14}) and (\ref{4.15}), we obtain $(i).$

$ii)$ It is obvious from (\ref{4.12}) with equation (\ref{5}).
\end{proof}

\subsection{Screen Semi-Invariant Hypersurfaces}

\begin{definition}
Let $\acute{N}$ be a lightlike hypersurface of a locally metallic
semi-Riemannian manifold $(\tilde{N},\tilde{g},\tilde{J})$. If%
\begin{equation}
\begin{array}{c}
\tilde{J}(RadT\acute{N})\subset S(T\acute{N}), \\ 
\tilde{J}(ltr(T\acute{N}))\subset S(T\acute{N}),%
\end{array}
\label{4.16}
\end{equation}%
then $\acute{N}$ is called a screen semi-invariant hypersurface of $\tilde{N}
$.
\end{definition}

\begin{example}
Let $\tilde{N}=%
\mathbb{R}
_{2}^{5}$ be a five dimensional metallic semi-Riemannian manifold with the
structure $(\tilde{N},\tilde{g},\tilde{J})$ given in Example \ref{ex-1}.
Consider a hypersurface $\acute{N}$ of $\tilde{N}$ with 
\begin{equation*}
x_{5}=\sigma x_{1}+\sigma x_{2}+x_{3}.
\end{equation*}%
Then $T\acute{N}$ of $\acute{N}$ is spanned by%
\begin{equation*}
\Phi _{1}=\frac{\partial }{\partial x_{1}}+\sigma \frac{\partial }{\partial
x_{5}},\text{ \ \ \ \ }\Phi _{2}=\frac{\partial }{\partial x_{2}}+\sigma 
\frac{\partial }{\partial x_{5}}
\end{equation*}%
\begin{equation*}
\Phi _{3}=\frac{\partial }{\partial x_{3}}+\frac{\partial }{\partial x_{5}},%
\text{ \ \ \ \ \ \ }\Phi _{4}=\frac{\partial }{\partial x_{4}}.
\end{equation*}%
The radical distribution $RadT\acute{N}$ and lightlike transversal
distribution $ltr(T\acute{N})$ are given by%
\begin{equation*}
RadT\acute{N}=Sp\{E=\sigma \Phi _{1}-\sigma \Phi _{2}+\Phi _{3}\},
\end{equation*}%
\begin{equation*}
ltr(T\acute{N})=Sp\left\{ N=\frac{1}{2}\left( -\sigma \frac{\partial }{%
\partial x_{1}}+\sigma \frac{\partial }{\partial x_{2}}-\frac{\partial }{%
\partial x_{3}}+\frac{\partial }{\partial x_{5}}\right) \right\} .
\end{equation*}%
It follows that $S(T\acute{N})$ is spanned by $\{\Omega _{1},\Omega
_{2},\Omega _{3}\}$, where%
\begin{equation*}
\Omega _{1}=-q\frac{\partial }{\partial x_{1}}+q\frac{\partial }{\partial
x_{2}}+\sigma \frac{\partial }{\partial x_{3}}+\sigma \frac{\partial }{%
\partial x_{5}},
\end{equation*}%
\begin{equation*}
\Omega _{2}=\frac{1}{2}\left\{ 
\begin{array}{c}
-\sigma (p-\sigma )\frac{\partial }{\partial x_{1}}+\sigma (p-\sigma )\frac{%
\partial }{\partial x_{2}} \\ 
-\sigma \frac{\partial }{\partial x_{3}}+\sigma \frac{\partial }{\partial
x_{5}}%
\end{array}%
\right\} ,
\end{equation*}%
\begin{equation*}
\Omega _{3}=\frac{\partial }{\partial x_{4}}.
\end{equation*}%
Thus we arrive at%
\begin{equation*}
\Omega _{1}=\tilde{J}E\text{ \ \ \ \ \ \ \ \ and \ \ \ \ \ \ \ }\Omega _{2}=%
\tilde{J}N,
\end{equation*}%
which imply that $\acute{N}$ is a screen semi-invariant hypersurface of $%
\tilde{N}.$
\end{example}

\begin{example}
Let $\tilde{N}=%
\mathbb{R}
_{2}^{5}$ be a $5$-dimensional metallic semi-Riemannian manifold with the
structure $(\tilde{N},\tilde{g},\tilde{J})$ given in Example \ref{ex-2}. If
we take a hypersurface $\acute{N}$ of $\tilde{N}$ given by 
\begin{equation*}
x_{5}=\sigma x_{3}+\sigma x_{4}+x_{1},
\end{equation*}%
then $T\acute{N}$ of $\acute{N}$ is spanned by%
\begin{equation*}
\Phi _{1}=\frac{\partial }{\partial x_{1}}+\frac{\partial }{\partial x_{5}},%
\text{ \ \ \ \ }\Phi _{2}=\frac{\partial }{\partial x_{2}},
\end{equation*}%
\begin{equation*}
\Phi _{3}=\frac{\partial }{\partial x_{3}}+\sigma \frac{\partial }{\partial
x_{5}},\text{ \ \ \ \ \ \ }\Phi _{4}=\frac{\partial }{\partial x_{4}}+\sigma 
\frac{\partial }{\partial x_{5}}.
\end{equation*}%
The radical distribution $RadT\acute{N}$ and lightlike transversal
distribution $ltr(T\acute{N})$ are given by%
\begin{equation*}
RadT\acute{N}=Sp\{E=\sigma \Phi _{3}-\sigma \Phi _{4}+\Phi _{1}\},
\end{equation*}%
\begin{equation*}
ltr(T\acute{N})=Sp\left\{ N=\frac{1}{2}(-\frac{\partial }{\partial x_{1}}-%
\frac{\partial }{\partial x_{3}}+\frac{\partial }{\partial x_{4}}+\frac{%
\partial }{\partial x_{5}})\right\} ,
\end{equation*}%
respectively. It follows that $S(T\acute{N})$ is spanned by $\{\Omega
_{1},\Omega _{2},\Omega _{3}\}$, where%
\begin{equation*}
\Omega _{1}=\frac{1}{2}\left\{ 
\begin{array}{c}
-\sigma \frac{\partial }{\partial x_{1}}-\sigma \frac{\partial }{\partial
x_{3}} \\ 
+\sigma \frac{\partial }{\partial x_{4}}+\sigma \frac{\partial }{\partial
x_{5}}%
\end{array}%
\right\} ,
\end{equation*}%
\begin{equation*}
\Omega _{2}=\sigma \frac{\partial }{\partial x_{1}}+\sigma ^{2}\frac{%
\partial }{\partial x_{2}}-\sigma ^{2}\frac{\partial }{\partial x_{4}}%
+\sigma \frac{\partial }{\partial x_{5}},
\end{equation*}%
\begin{equation*}
\Omega _{3}=\frac{\partial }{\partial x_{2}}.
\end{equation*}%
Thus we arrive at%
\begin{equation*}
\Omega _{1}=\tilde{J}N\text{ \ \ \ \ \ \ \ \ and \ \ \ \ \ \ \ }\Omega _{2}=%
\tilde{J}E,
\end{equation*}%
which imply that $\acute{N}$ is a screen semi-invariant hypersurface of $%
\tilde{N}.$
\end{example}

Since $S(T\acute{N})$ is non-degenerate, we can define an $(n-2)$%
-dimensional distribution $\mu _{0}$ such that%
\begin{equation}
S(T\acute{N})=\mu _{0}\bot \{\tilde{J}(RadT\acute{N})\oplus \tilde{J}(ltr(T%
\acute{N}))\},  \label{4.17}
\end{equation}%
from which 
\begin{equation}
T\acute{N}=\{\tilde{J}(RadT\acute{N})\oplus \tilde{J}(ltr(T\acute{N}))\}\bot
\mu _{0}\bot Rad(T\acute{N}),  \label{4.18}
\end{equation}%
\begin{equation}
T\tilde{N}=\{\tilde{J}(RadT\acute{N})\oplus \tilde{J}(ltr(T\acute{N}))\}\bot
\mu _{0}\bot \{Rad(T\acute{N})\oplus ltr(T\acute{N})\}.  \label{4.19}
\end{equation}%
Taking $\mathring{D}=Rad(T\acute{N})\bot \tilde{J}(Rad(T\acute{N}))\bot \mu
_{0}$ and $\mathring{D}^{\prime }=\tilde{J}(ltr(T\acute{N}))$ on $\acute{N},$
we get%
\begin{equation}
TN=\mathring{D}\oplus \mathring{D}^{\prime }.  \label{4.20}
\end{equation}%
Let $\zeta =\tilde{J}N$ and $\psi =\tilde{J}E$ be \ local lightlike vector
fields. For $U\in \Gamma (T\acute{N}),$ we can write%
\begin{equation}
U=QU+RU,  \label{4.21}
\end{equation}%
where $Q$ and $R$ are projections of $T\acute{N}$ into $\mathring{D}$ and $%
\mathring{D}^{\prime }$, respectively.

Moreover, for $U,V\in \Gamma (T\acute{N}),$ $\zeta \in \Gamma (\mathring{D}%
^{\prime })$ and $\psi \in \Gamma (\mathring{D}),$ we get%
\begin{equation}
\varphi ^{2}U=p\varphi U+q(U)-u(U)\zeta ,  \label{4.22}
\end{equation}%
\begin{equation}
u(\varphi U)=pu(U),\text{ \ \ \ \ }u(\zeta )=q,  \label{4.23}
\end{equation}%
\begin{equation}
g(\varphi U,V)=g(U,\varphi V)+u(V)\theta (U)-u(U)\theta (V),  \label{4.24}
\end{equation}%
\begin{eqnarray}
g(\varphi U,\varphi V) &=&pg(U,\varphi V)+qg(U,V)+pu(V)\theta (U)  \notag \\
&&-u(V)g(\varphi U,N)-u(U)g(\varphi V,N),  \label{4.25}
\end{eqnarray}%
\begin{equation}
(\nabla _{U}\varphi )V=u(V)A_{N}U+g(A_{E}^{\ast }U,V)\zeta ,  \label{4.26}
\end{equation}%
\begin{equation}
(\nabla _{U}u)V=-B(U,\varphi V)-u(V)\tau (U),  \label{4.27}
\end{equation}%
\begin{equation}
\nabla _{U}\zeta =-\varphi A_{N}U+\tau (U)\zeta ,  \label{4.28}
\end{equation}%
\begin{equation}
\nabla _{U}\psi =-\varphi A_{E}^{\ast }U-\tau (U)\psi ,  \label{4.29}
\end{equation}%
\begin{equation}
B(U,\zeta )=-C(U,\psi ).  \label{4.30}
\end{equation}

\begin{theorem}
Assume that $\acute{N}$ is a screen semi-invariant lightlike hypersurface of
a locally metallic semi-Riemannian manifold $(\tilde{N},\tilde{g},\tilde{J})$%
. Then lightlike vector field $\psi $ is parallel on $\acute{N}$ iff

$i)$ $\acute{N}$ is totally geodesic on $\tilde{N},$

$ii)$ $\tau =0.$
\end{theorem}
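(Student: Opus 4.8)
The plan is to start from the defining equation for the parallelism of $\psi$, namely $\nabla_U\psi=0$ for all $U\in\Gamma(T\acute{N})$, and compare it directly with the structural identity (\ref{4.29}). That identity reads
\begin{equation*}
\nabla_U\psi=-\varphi A_E^{\ast}U-\tau(U)\psi,
\end{equation*}
so $\psi$ is parallel if and only if $\varphi A_E^{\ast}U+\tau(U)\psi=0$ for every $U$. The whole proof amounts to showing that this single vector equation is equivalent to the conjunction of the two stated conditions, $B=0$ (total geodesy) together with $\tau=0$.

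**First I would** extract scalar information by pairing the equation $\varphi A_E^{\ast}U+\tau(U)\psi=0$ against well-chosen vector fields. Pairing with $N$ and using $g(\psi,N)=g(\tilde{J}E,N)=g(E,\tilde{J}N)=g(E,\zeta)$ — which by the screen semi-invariant splitting lies in $S(T\acute{N})$ and is $\tilde{g}$-orthogonal to $E$, hence $g(\psi,N)=0$ — kills the $\tau(U)\psi$ term and isolates $g(\varphi A_E^{\ast}U,N)$. Next I would test against $\psi$ itself: since $\psi$ is lightlike, $g(\psi,\psi)=0$, so the $\tau$-term again drops out and one obtains a condition purely on $\varphi A_E^{\ast}U$. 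The natural strategy is to apply $\tilde{J}$ to the relation and use the metallic identity $\tilde{J}^2=p\tilde{J}+qI$ to recover $A_E^{\ast}U$ from $\varphi A_E^{\ast}U$, thereby converting statements about $\varphi A_E^{\ast}U$ into statements about the shape operator $A_E^{\ast}$ directly. Once $A_E^{\ast}=0$ is forced, the identity (\ref{18*}), $g(A_E^{\ast}U,PV)=B(U,PV)$ together with $B(U,E)=0$ from (\ref{14}), yields $B\equiv0$, which is exactly total geodesy; feeding $A_E^{\ast}U=0$ back into $\varphi A_E^{\ast}U+\tau(U)\psi=0$ then leaves $\tau(U)\psi=0$, and since $\psi$ is a nonvanishing lightlike field, $\tau=0$ follows.

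**For the converse** I would simply substitute $B=0$ (equivalently $A_E^{\ast}=0$ via (\ref{18*})) and $\tau=0$ into (\ref{4.29}) and read off $\nabla_U\psi=0$; this direction is immediate and requires no further computation.

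**The hard part will be** the forward implication — specifically, arguing that $\varphi A_E^{\ast}U+\tau(U)\psi=0$ genuinely separates into the vanishing of \emph{both} terms rather than allowing a cancellation between $\varphi A_E^{\ast}U\in\Gamma(T\acute{N})$ and $\tau(U)\psi$. The key subtlety is that $\psi=\tilde{J}E$ lies in the screen distribution $S(T\acute{N})$ for a screen semi-invariant hypersurface, so I must use the direct-sum decomposition (\ref{4.18}) and the fact that $A_E^{\ast}E=0$ from (\ref{17}) to show that $\varphi A_E^{\ast}U$ cannot have a component along $\psi$ that would conspire with $\tau(U)\psi$. Establishing that the $\psi$-component and the remaining components of the equation vanish independently is where the structural identities (\ref{18*}), (\ref{17}) and the screen decomposition must be invoked carefully; the rest is routine pairing against a basis adapted to (\ref{4.18}).
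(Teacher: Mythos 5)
Your proposal is correct and follows essentially the same route as the paper: set $\nabla_U\psi=-\varphi A_E^{\ast}U-\tau(U)\psi=0$ from (\ref{4.29}), apply $\tilde{J}$ and use $\tilde{J}^{2}=p\tilde{J}+qI$ to pass from $\varphi A_E^{\ast}U$ back to $A_E^{\ast}U$, then separate the components along $N$, $E$ and the screen to force $A_E^{\ast}=0$ (hence $B=0$ via (\ref{18*}) and (\ref{14})) and $\tau=0$, with the converse by direct substitution. If anything, your explicit attention to why the $\mathring{D}$-type decomposition lets the $A_E^{\ast}U$ and $\tau(U)E$ pieces vanish separately spells out a step the paper leaves implicit.
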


\begin{proof}
Let $\psi $ be a parallel vector field. In view of (\ref{27}) and (\ref{4.29}%
), for $U\in \Gamma (T\acute{N}),$ we get%
\begin{eqnarray}
0 &=&-\varphi A_{E}^{\ast }U-\tau (U)\psi  \notag \\
&=&-\tilde{J}A_{E}^{\ast }U-\tau (U)\psi +u(A_{E}^{\ast }U)N.  \label{4.31}
\end{eqnarray}%
Applying $\tilde{J}$ to (\ref{4.31}) and from (\ref{27}) with (\ref{3}), we
find 
\begin{eqnarray}
&&-p\varphi (A_{E}^{\ast }U)-pu(A_{E}^{\ast }U)N-qA_{E}^{\ast }U  \notag \\
&&-p\tau (U)\psi -q\tau (U)E-u(A_{E}^{\ast }U)\zeta  \label{4.32} \\
&=&0.  \notag
\end{eqnarray}%
Taking tangential and transversal part of equation (\ref{4.32}), we obtain%
\begin{equation*}
qA_{E}^{\ast }U=-q\tau (U)E-u(A_{E}^{\ast }U)\zeta ,\text{ \ \ \ \ }%
pu(A_{E}^{\ast }U)=0,
\end{equation*}%
which yields 
\begin{equation*}
A_{E}^{\ast }U=0,\text{ \ \ \ \ \ }\tau (U)=0.
\end{equation*}
\end{proof}

\begin{theorem}
Let $\acute{N}$ be a screen semi-invariant lightlike hypersurface of a
locally metallic semi-Riemannian manifold $(\tilde{N},\tilde{g},\tilde{J})$.
Then lightlike vector field $\zeta $ is parallel on $\acute{N}$ iff $\acute{N%
}$ and $S(T\acute{N})$ is totally geodesic on $\tilde{N}.$
\end{theorem}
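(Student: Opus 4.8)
The plan is to differentiate the defining identity $\zeta=\tilde{J}N$ and use that $\tilde{J}$ is parallel. Since $\tilde{\nabla}\tilde{J}=0$, the Weingarten formula (\ref{11}) gives $\tilde{\nabla}_{U}\zeta=\tilde{J}\tilde{\nabla}_{U}N=\tilde{J}(-A_{N}U+\tau(U)N)=-\tilde{J}A_{N}U+\tau(U)\zeta$. Expanding $\tilde{J}A_{N}U=\varphi A_{N}U+u(A_{N}U)N$ by (\ref{27}) and comparing with the Gauss formula (\ref{10}), the tangential part reproduces (\ref{4.28}), $\nabla_{U}\zeta=-\varphi A_{N}U+\tau(U)\zeta$, while the transversal part gives $B(U,\zeta)=-u(A_{N}U)$. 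Declaring $\zeta$ parallel, i.e. $\tilde{\nabla}_{U}\zeta=0$ for all $U\in\Gamma(T\acute{N})$, therefore amounts to the two conditions $\varphi A_{N}U=\tau(U)\zeta$ and $u(A_{N}U)=0$.

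The heart of the argument is the same device used for $\psi$ in the previous theorem: apply $\tilde{J}$ once more to the tangential relation. From $\varphi A_{N}U=\tau(U)\zeta$ and $u(A_{N}U)=0$ I get $\tilde{J}A_{N}U=\tau(U)\zeta$, and a second application of $\tilde{J}$ together with the metallic identity (\ref{3}), $\tilde{J}N=\zeta$, and $\tilde{J}\zeta=\tilde{J}^{2}N=p\zeta+qN$ yields $p\tilde{J}A_{N}U+qA_{N}U=p\tau(U)\zeta+q\tau(U)N$. Substituting $\tilde{J}A_{N}U=\tau(U)\zeta$ on the left collapses this to $qA_{N}U=q\tau(U)N$. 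Splitting into tangential and transversal components is the decisive step: the left-hand side lies in $\Gamma(T\acute{N})$ while the right-hand side is a section of $ltr(T\acute{N})$, so both sides vanish, forcing $A_{N}U=0$ and $\tau(U)=0$ for every $U$.

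It then remains to translate these vanishings into the geometric statements. By (\ref{19}) the identity $A_{N}=0$ is equivalent to $C(U,PV)=g(A_{N}U,PV)=0$, which is exactly the total geodesy of the screen distribution $S(T\acute{N})$. For the ambient total geodesy of $\acute{N}$ I would combine the transversal output $u(A_{N}U)=0$ with the relation (\ref{4.30}), $B(U,\zeta)=-C(U,\psi)$, and with (\ref{14}) and (\ref{18*}), using the decomposition (\ref{4.18}) of $T\acute{N}$ to see which components of the second fundamental form $B$ still need to be controlled. The converse is the routine substitution: if $A_{N}$, $B$, $C$ and $\tau$ vanish, feeding this back into (\ref{4.28}) and into the transversal relation $B(U,\zeta)=-u(A_{N}U)$ returns $\tilde{\nabla}_{U}\zeta=0$.

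The step I expect to be the main obstacle is the tangential/transversal bookkeeping after the second application of $\tilde{J}$, and in particular the role of the term $q\tau(U)N$. In the companion result for $\psi$ the analogous term was $q\tau(U)E$, which is \emph{radical} and hence decouples $\tau$ inside $T\acute{N}$; for $\zeta$ the corresponding term is \emph{transversal}, coming from $\tilde{J}\zeta=p\zeta+qN$, so it is the $N$-component rather than a radical one that forces $\tau=0$ and separates $A_{N}$ from the structure tensor $\varphi$. Keeping this distinction straight, and then matching $A_{N}=0$ against $C=0$ and the residual $B$-components against the total geodesy of $\acute{N}$ through (\ref{4.30}) and (\ref{18*}), is the delicate part; everything else is direct computation.
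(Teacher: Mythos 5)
Your central computation is the same device the paper uses --- apply $\tilde{J}$ to the parallelism relation, invoke $\tilde{J}^{2}=p\tilde{J}+qI$, and separate tangential from transversal parts --- and your bookkeeping is in fact tighter than the printed proof. Note, however, that you read ``parallel'' as $\tilde{\nabla}_{U}\zeta=0$, whereas the paper (via (\ref{4.28})) means $\nabla_{U}\zeta=0$ for the induced connection; it is precisely your extra transversal condition $u(A_{N}U)=0$ that makes the collapse to $qA_{N}U=q\tau(U)N$ possible and forces $A_{N}=0$ and $\tau=0$ through the direct sum $T\tilde{N}=T\acute{N}\oplus ltr(T\acute{N})$. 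The paper, working with the induced connection, only reaches $qA_{N}U=u(A_{N}U)\zeta$ and $pu(A_{N}U)=q\tau(U)$ and stops; those two relations by themselves still admit nonzero solutions with $A_{N}U$ proportional to $\zeta$ and $\tau$ determined by the proportionality factor, so they do not yet give $C=0$. Your version is the one that actually delivers the total geodesy of $S(T\acute{N})$.

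The genuine gap is the one you flag yourself: the implication ``$\zeta$ parallel $\Rightarrow$ $\acute{N}$ totally geodesic,'' i.e.\ $B\equiv 0$. This cannot be extracted from $A_{N}=0$ and $\tau=0$ by the tools you cite. By (\ref{18*}) and (\ref{14}) the second fundamental form $B$ is governed entirely by $A_{E}^{\ast}$, which is untouched by the parallelism of $\zeta$; the only component your argument controls is $B(\cdot,\zeta)=-C(\cdot,\psi)=0$ via (\ref{4.30}). (It is the parallelism of $\psi$, in the preceding theorem, that constrains $A_{E}^{\ast}$ and hence $B$.) So the plan of ``matching the residual $B$-components against the total geodesy of $\acute{N}$'' has nothing to match them against, and the $B=0$ half of the forward implication is simply not available by this route. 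The paper's own proof has the identical lacuna --- it derives the two displayed relations and announces the assertion --- so you have reproduced everything that is actually provable here; but as a proof of the theorem as stated, the claim that $\acute{N}$ itself is totally geodesic remains unestablished. Your converse is also honest in quietly adding $\tau=0$ to the hypotheses: this is genuinely needed, since $C=0$ only yields $A_{N}=0$ and hence $\nabla_{U}\zeta=\tau(U)\zeta$, which does not vanish without it, even though the statement omits that hypothesis.
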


\begin{proof}
Since $\zeta $ is a parallel vector field$,$ by use of (\ref{27}) and (\ref%
{4.28}), for $U\in \Gamma (T\acute{N})$ we get%
\begin{eqnarray}
0 &=&-\varphi A_{N}U-\tau (U)\zeta  \label{4.33} \\
&=&-\tilde{J}A_{N}U-\tau (U)\zeta +u(A_{N}U)N.  \notag
\end{eqnarray}%
Applying $\tilde{J}$ to (\ref{4.33}) and using (\ref{27}) with (\ref{3}), we
have 
\begin{eqnarray}
&&-p\varphi (A_{N}U)-pu(A_{N}U)N-qA_{N}U  \label{4.34} \\
&&-p\tau (U)\zeta -q\tau (U)N-u(A_{N}U)\zeta  \notag \\
&=&0.  \notag
\end{eqnarray}%
Therefore, from (\ref{4.34}), we obtain%
\begin{equation*}
qA_{N}U=u(A_{N}U)\zeta ,\text{ \ \ \ \ }pu(A_{N}U)=q\tau (U).
\end{equation*}%
So we get the proof of our assertion.
\end{proof}

\begin{definition}
Let $\acute{N}$ be a screen semi-invariant lightlike hypersurface of a
locally metallic semi-Riemannian manifold $(\tilde{N},\tilde{g},\tilde{J})$.
If $B(U,V)=0$, for any $U\in \Gamma (\mathring{D})$ and $V\in \Gamma (%
\mathring{D}^{\prime })$, then $\acute{N}$ is called a mixed geodesic
lightlike hypersurface.
\end{definition}

\begin{theorem}
Let $\acute{N}$ be a screen semi-invariant lightlike hypersurface of a
locally metallic semi-Riemannian manifold $(\tilde{N},\tilde{g},\tilde{J})$.
Then $\acute{N}$ is a mixed geodesic lightlike hypersurface if and only if

$i)$ There is no component of $A_{N},$ $\mathring{D}-$valuable.

$ii)$ There is no component of $A_{E}^{\ast },$ $\mathring{D}^{\prime }-$%
valuable.
\end{theorem}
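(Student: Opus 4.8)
The plan is to collapse the mixed geodesic condition to a single scalar identity and then read it off through the two structure equations (\ref{18*}) and (\ref{4.30}). Since $\mathring{D}^{\prime}=\tilde{J}(ltr(T\acute{N}))$ is spanned by the single lightlike field $\zeta=\tilde{J}N$, the defining requirement $B(U,V)=0$ for $U\in \Gamma(\mathring{D})$ and $V\in \Gamma(\mathring{D}^{\prime})$ reduces to $B(U,\zeta)=0$ for every $U\in \Gamma(\mathring{D})$. By (\ref{14}) we have $B(U,E)=0$, so the radical summand of $\mathring{D}$ contributes nothing, and it suffices to test $U$ on $\psi=\tilde{J}E$ and on $\mu_{0}$.

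Next I would rewrite $B(U,\zeta)$ in terms of the two shape operators. Because $\zeta\in S(T\acute{N})$, equation (\ref{18*}) gives $B(U,\zeta)=g(A_{E}^{\ast}U,\zeta)$; on the other hand (\ref{4.30}) together with (\ref{19}) gives $B(U,\zeta)=-C(U,\psi)=-g(A_{N}U,\psi)$. Hence the single scalar $B(U,\zeta)$ equals simultaneously $g(A_{E}^{\ast}U,\zeta)$ and $-g(A_{N}U,\psi)$, so the mixed geodesic condition is equivalent to the vanishing of either expression for all $U\in \Gamma(\mathring{D})$; the two resulting conditions are automatically equivalent to each other through (\ref{4.30}).

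The decisive step is to translate these vanishings into the absence of the stated components of $A_{N}$ and $A_{E}^{\ast}$. For this I would first record, from (\ref{5}), the pairings of the frame $\{E,\psi,\zeta\}$: one finds $g(\psi,\psi)=g(\zeta,\zeta)=0$ while $g(\psi,\zeta)=q$, and both $\psi,\zeta$ are $g$-orthogonal to $E$ and to $\mu_{0}$. Since $g(A_{N}U,N)=0$ and $g(A_{E}^{\ast}U,N)=0$ by (\ref{18*}) and (\ref{19}), both $A_{N}U$ and $A_{E}^{\ast}U$ lie in $S(T\acute{N})=\mu_{0}\bot \{\tilde{J}(RadT\acute{N})\oplus \tilde{J}(ltr(T\acute{N}))\}$, as in (\ref{4.17}). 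Writing $A_{N}U=w+a\psi+b\zeta$ with $w\in \mu_{0}$, the pairing annihilates $w$ and the null term $a\psi$, leaving $g(A_{N}U,\psi)=bq$; thus $g(A_{N}U,\psi)=0$ is equivalent to the vanishing of a single component of $A_{N}U$ in the splitting $T\acute{N}=\mathring{D}\oplus \mathring{D}^{\prime}$, which is condition $(i)$. The identical bookkeeping for $A_{E}^{\ast}U$ gives $g(A_{E}^{\ast}U,\zeta)=qa$, whose vanishing is condition $(ii)$. Every implication above is reversible, so the converse directions are immediate.

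The step I expect to be the main obstacle is precisely this final bookkeeping. Because $\psi$ and $\zeta$ are both lightlike and are \emph{not} $g$-orthogonal, their pairing being the nonzero constant $q$, one cannot extract components by naive orthogonal projection, and it is easy to attach the vanishing scalar to the wrong summand of $\mathring{D}\oplus \mathring{D}^{\prime}$. The remedy is to use $\psi/q$ and $\zeta/q$ as the vectors dual to $\zeta$ and $\psi$, which fixes unambiguously which component of each shape operator is forced to vanish and thereby secures the correct reading of $(i)$ and $(ii)$.
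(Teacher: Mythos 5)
Your argument follows essentially the same route as the paper: both proofs reduce the mixed geodesic condition to the single scalar identity $B(U,\zeta)=0$ and then identify this scalar with $-\tilde{g}(A_{N}U,\tilde{J}E)$ and with $\tilde{g}(A_{E}^{\ast }U,\tilde{J}N)$. The paper gets there by differentiating $\tilde{J}N$ through the Gauss and Weingarten formulas and using $\tilde{\nabla}\tilde{J}=0$, whereas you invoke the already-established identities (\ref{18*}), (\ref{19}) and (\ref{4.30}); these are equivalent, and your version is slightly cleaner. Your extra step --- the explicit decomposition $A_{N}U=w+a\psi +b\zeta $ against the null pairings $g(\psi ,\psi )=g(\zeta ,\zeta )=0$, $g(\psi ,\zeta )=q$ --- is exactly the bookkeeping the paper omits (its proof stops at the scalar identities and simply asserts $(i)$ and $(ii)$), and it is the right way to make the conclusion precise. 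However, in the last clause you attach the labels inconsistently with your own computation: $g(A_{N}U,\psi )=bq$ forces $b=0$, i.e.\ the vanishing of the $\zeta $-component of $A_{N}U$, which lies in $\mathring{D}^{\prime }$, not in $\mathring{D}$; dually, $g(A_{E}^{\ast }U,\zeta )=qa$ forces the vanishing of the $\psi $-component of $A_{E}^{\ast }U$, which lies in $\tilde{J}(RadT\acute{N})\subset \mathring{D}$. So the correct conclusions are ``$A_{N}U$ has no $\mathring{D}^{\prime }$-component'' and ``$A_{E}^{\ast }U$ has no component along $\tilde{J}(RadT\acute{N})\subset \mathring{D}$,'' which is the opposite of the literal reading of $(i)$ and $(ii)$. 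This is precisely the pitfall you warned about in your final paragraph, and it strongly suggests that the theorem's statement has $\mathring{D}$ and $\mathring{D}^{\prime }$ interchanged; your mathematics is sound, but you should state the corrected attribution rather than conform to the printed one.
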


\begin{proof}
Suppose that $\acute{N}$ is mixed geodesic, i.e.,%
\begin{equation}
B(U,\zeta )=0.  \label{4.35}
\end{equation}

By use of (\ref{4}) and (\ref{11}) in (\ref{4.35}), we get%
\begin{eqnarray*}
0 &=&B(U,\zeta )=B(U,\tilde{J}N) \\
&=&\tilde{g}(\tilde{\nabla}_{U}\tilde{J}N,E) \\
&=&\tilde{g}((\tilde{\nabla}_{U}\tilde{J})N+\tilde{J}\tilde{\nabla}_{U}N,E)
\\
&=&\tilde{g}(\tilde{\nabla}_{U}N,\tilde{J}E) \\
&=&-\tilde{g}(A_{N}U,\tilde{J}E),
\end{eqnarray*}%
from which we obtain $(i).$

Since 
\begin{equation*}
-\tilde{g}(A_{N}U,\tilde{J}E)=\tilde{g}(A_{E}^{\ast }U,\tilde{J}N),
\end{equation*}%
we arrive at $(ii)$.
\end{proof}

Now, we consider the distribution $\mu _{0}$, defined in (\ref{4.17}). In
view of (\ref{4.18}) and taking 
\begin{equation*}
\beta =\{\tilde{J}(Rad(T\acute{N}))\oplus \tilde{J}(ltr(T\acute{N}))\}\bot
Rad(T\acute{N}),
\end{equation*}%
for any $U\in \Gamma (T\acute{N})$, $V\in \Gamma (\mu _{0})$ and $Z\in
\Gamma (\beta )$, we can state%
\begin{equation}
\nabla _{U}V=\overset{\mu _{0}}{\nabla }_{U}V+\overset{\mu _{0}}{h}(U,V),
\label{4.36}
\end{equation}%
\begin{equation}
\nabla _{U}Z=-\overset{\mu _{0}}{A}_{Z}U+\nabla _{U}^{\bot }Z,  \label{4.37}
\end{equation}%
where $\overset{\mu _{0}}{\nabla }$ is a linear connection on $\mu _{0}$, $%
\overset{\mu _{0}}{h}:\Gamma (T\acute{N})\times \Gamma (\mu _{0})\rightarrow
\Gamma (\beta )$ is an $\Im (\acute{N})$ bilinear, $\overset{\mu _{0}}{A}$
is an $\Im (\acute{N})$ linear operator on $\Gamma (\mu _{0}),$
respectively, and $\nabla ^{\bot }$ is a linear connection on $\beta $.

Let $\wp \subset \acute{N}$ be a coordinate neighborhood. Then according to
decomposition given by (\ref{4.18}), we take%
\begin{equation}
\begin{array}{cc}
\alpha _{1}(U,V) & =g(\overset{\mu _{0}}{h}(U,V),\tilde{J}N), \\ 
\alpha _{2}(U,V) & =g(\overset{\mu _{0}}{h}(U,V),\tilde{J}E), \\ 
\alpha _{3}(U,V) & =g(\overset{\mu _{0}}{h}(U,V),N),%
\end{array}
\label{4.38}
\end{equation}%
for every $U,V\in \Gamma (\mu _{0}\,|_{\wp })$. Thus we can write equation (%
\ref{4.36}) by 
\begin{equation}
\nabla _{U}V=\overset{\mu _{0}}{\nabla }_{U}V+\frac{1}{q}\alpha _{1}(U,V)%
\tilde{J}E+\frac{1}{q}\alpha _{2}(U,V)\tilde{J}N+\alpha _{3}(U,V)E.
\label{4.39}
\end{equation}%
We shall compute $\alpha _{1},\alpha _{2}$ and $\alpha _{3}$ in terms of $B$
and $C$. Starting with%
\begin{eqnarray*}
g(\nabla _{U}V,\tilde{J}N) &=&g(\overset{\mu _{0}}{\nabla }_{U}V+\frac{1}{q}%
\alpha _{1}(U,V)\tilde{J}E+\frac{1}{q}\alpha _{2}(U,V)\tilde{J}N+\alpha
_{3}(U,V)E,\tilde{J}N) \\
&=&\alpha _{1}(U,V).
\end{eqnarray*}%
Then by use of (\ref{10}) and (\ref{11}), we get%
\begin{eqnarray*}
g(\nabla _{U}V,\tilde{J}N) &=&g(\tilde{J}\nabla _{U}V,N) \\
&=&g(\tilde{J}\tilde{\nabla}_{U}V,N) \\
&=&g((\tilde{\nabla}_{U}\tilde{J})V+\tilde{J}\tilde{\nabla}_{U}V,N) \\
&=&g(\tilde{J}\tilde{\nabla}_{U}V,N) \\
&=&-g(A_{N}U,\tilde{J}V)=-C(U,\tilde{J}V).
\end{eqnarray*}

Next we find%
\begin{eqnarray*}
g(\nabla _{U}V,\tilde{J}E) &=&g(\overset{\mu _{0}}{\nabla }_{U}V+\frac{1}{q}%
\alpha _{1}(U,V)\tilde{J}E+\frac{1}{q}\alpha _{2}(U,V)\tilde{J}N+\alpha
_{3}(U,V)E,\tilde{J}E) \\
&=&\alpha _{2}(U,V),
\end{eqnarray*}%
and from (\ref{10}) and (\ref{13}), we obtain

\begin{eqnarray*}
g(\nabla _{U}V,\tilde{J}E) &=&g(\tilde{J}\nabla _{U}V,E) \\
&=&g(\tilde{J}\tilde{\nabla}_{U}V,E) \\
&=&g((\tilde{\nabla}_{U}\tilde{J})V+\tilde{J}\tilde{\nabla}_{U}V,E) \\
&=&g(\tilde{J}V,\tilde{\nabla}_{U}E)=B(U,\tilde{J}V).
\end{eqnarray*}

By a similar way, we compute 
\begin{eqnarray*}
g(\nabla _{U}V,N) &=&g(\overset{\mu _{0}}{\nabla }_{U}V+\frac{1}{q}\alpha
_{1}(U,V)\tilde{J}E+\frac{1}{q}\alpha _{2}(U,V)\tilde{J}N+\alpha
_{3}(U,V)E,N) \\
&=&\alpha _{3}(U,V),
\end{eqnarray*}%
and%
\begin{equation*}
g(\nabla _{U}V,N)=-C(U,V).
\end{equation*}%
So, we can rewrite equation (\ref{4.39}) with 
\begin{equation}
\nabla _{U}V=\overset{\mu _{0}}{\nabla }_{U}V-C(U,\tilde{J}V)\tilde{J}E-B(U,%
\tilde{J}V)\tilde{J}N-C(U,V)E,  \label{4.40}
\end{equation}

\begin{theorem}
Let $\acute{N}$ be a screen semi-invariant lightlike hypersurface of a
locally metallic semi-Riemannian manifold $(\tilde{N},\tilde{g},\tilde{J})$.
Then $\mu _{0}$ is integrable if and only if%
\begin{equation}
C(\tilde{J}U,V)=C(U,\tilde{J}V),\text{ \ \ }B(\tilde{J}U,V)=B(U,\tilde{J}V),%
\text{ \ \ }C(U,V)=C(V,U),  \label{4.41}
\end{equation}%
for every $U,V\in \Gamma (\mu _{0})$.
\end{theorem}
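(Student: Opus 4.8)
The plan is to read the integrability obstruction for $\mu_0$ directly off the splitting formula \eqref{4.40}. By the Frobenius criterion, $\mu_0$ is integrable precisely when $[U,V]\in\Gamma(\mu_0)$ for all $U,V\in\Gamma(\mu_0)$. Since $\tilde\nabla$ is the Levi--Civita connection and the second fundamental form $B$ is symmetric (so in $\tilde\nabla_UV-\tilde\nabla_VU=[U,V]$ the transversal term $(B(U,V)-B(V,U))N$ vanishes), the induced connection $\nabla$ is torsion-free, giving $[U,V]=\nabla_UV-\nabla_VU$ with no $B(U,V)N$-type terms entering.

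First I would apply \eqref{4.40} in both orders and subtract, obtaining
\[
[U,V]=\big(\overset{\mu_0}{\nabla}_UV-\overset{\mu_0}{\nabla}_VU\big)-\big(C(U,\tilde JV)-C(V,\tilde JU)\big)\tilde JE-\big(B(U,\tilde JV)-B(V,\tilde JU)\big)\tilde JN-\big(C(U,V)-C(V,U)\big)E.
\]
The first bracket lies in $\Gamma(\mu_0)$, while $\tilde JE$, $\tilde JN$ and $E$ span $\tilde J(RadT\acute{N})$, $\tilde J(ltr(T\acute{N}))$ and $RadT\acute{N}$, i.e.\ exactly the three summands complementary to $\mu_0$ in the direct decomposition \eqref{4.18}. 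Hence $[U,V]\in\Gamma(\mu_0)$ if and only if the three coefficients vanish:
\[
C(U,\tilde JV)=C(V,\tilde JU),\qquad B(U,\tilde JV)=B(V,\tilde JU),\qquad C(U,V)=C(V,U).
\]

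The final step converts these antisymmetrized identities into the stated form \eqref{4.41}. Symmetry of $B$ gives $B(V,\tilde JU)=B(\tilde JU,V)$, turning the middle condition into $B(\tilde JU,V)=B(U,\tilde JV)$. For the first condition I would use that $\mu_0$ is $\tilde J$-invariant: since $\tilde J$ is self-adjoint and the complement $\{\tilde J(RadT\acute{N})\oplus\tilde J(ltr(T\acute{N}))\oplus RadT\acute{N}\oplus ltr(T\acute{N})\}$ is $\tilde J$-invariant, so is $\mu_0$, and for $U\in\Gamma(\mu_0)$ one has $u(U)=g(U,\tilde JE)=0$ by \eqref{29}, whence $\tilde JU=\varphi U\in\Gamma(\mu_0)$ by \eqref{27}. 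Applying the symmetry condition $C(U,V)=C(V,U)$ to the pair $\tilde JU,V\in\Gamma(\mu_0)$ then yields $C(V,\tilde JU)=C(\tilde JU,V)$, so the first identity becomes $C(\tilde JU,V)=C(U,\tilde JV)$, matching \eqref{4.41}.

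I expect the main obstacle to be precisely this repackaging: the three conditions are not equivalent to \eqref{4.41} term by term, because recasting the $C$-identity consumes both the symmetry condition $C(U,V)=C(V,U)$ and the $\tilde J$-invariance of $\mu_0$. I would therefore argue the equivalence as a package (in both directions at once) and make the $\tilde J$-invariance of $\mu_0$ explicit, so that the symmetry of $C$ on $\mu_0$ legitimately applies to arguments of the form $\tilde JU$.
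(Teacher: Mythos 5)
Your proposal is correct and follows essentially the same route as the paper: antisymmetrize \eqref{4.40}, use torsion-freeness of $\nabla$ to identify the result with $[U,V]$, and read off the components along $\tilde{J}E$, $\tilde{J}N$ and $E$ relative to the decomposition \eqref{4.18}. Your extra care in converting the raw Frobenius condition $C(U,\tilde{J}V)=C(V,\tilde{J}U)$ into the stated $C(\tilde{J}U,V)=C(U,\tilde{J}V)$ --- via the $\tilde{J}$-invariance of $\mu_0$ together with the symmetry condition on $C$ --- makes explicit a step the paper performs silently, since its bracket formula writes $C(\tilde{J}U,V)$ in place of $C(V,\tilde{J}U)$ without comment.
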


\begin{proof}
Because of $\nabla $ is linear connection, by using (\ref{4.40}) we get%
\begin{eqnarray*}
\lbrack U,V] &=&\overset{\mu _{0}}{\nabla }_{U}V-\overset{\mu _{0}}{\nabla }%
_{V}U \\
&&+(C(U,\tilde{J}V)-C(\tilde{J}U,V))\tilde{J}E \\
&&+(B(U,\tilde{J}V)-B(\tilde{J}U,V))\tilde{J}N \\
&&+(C(U,V)-C(V,U))E.
\end{eqnarray*}%
If $\mu _{0}$ is integrable then the components of $[U,V]$ with respect to $%
\tilde{J}E,$ $\tilde{J}N$ and $E$ vanish. So, we get proof of our asssertion.

Conversely, if (\ref{4.41}) is satisfied we get%
\begin{equation*}
\lbrack U,V]\in \Gamma (\mu _{0}).
\end{equation*}
\end{proof}

\begin{theorem}
Let $\acute{N}$ be a screen semi-invariant lightlike hypersurface of a
locally metallic semi-Riemannian manifold $(\tilde{N},\tilde{g},\tilde{J})$.
Then the distribution $\mathring{D}$ is integrable if and only if%
\begin{equation}
B(\tilde{J}U,\tilde{J}V)=pB(V,\tilde{J}U)+qB(V,U),  \label{4.42}
\end{equation}%
for every $U,V\in \Gamma (\mathring{D})$.
\end{theorem}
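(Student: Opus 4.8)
The plan is to reduce integrability of $\mathring{D}$ to the vanishing of a single scalar and then to identify that scalar with the stated expression. Since $TN=\mathring{D}\oplus\mathring{D}'$ by (\ref{4.20}) with $\mathring{D}'=\tilde{J}(ltr(T\acute{N}))=\mathrm{Sp}\{\zeta\}$, $\zeta=\tilde{J}N$, and since $[U,V]\in\Gamma(T\acute{N})$ automatically, the distribution $\mathring{D}$ is integrable if and only if the $\mathring{D}'$-component of $[U,V]$ vanishes for all $U,V\in\Gamma(\mathring{D})$. The observation I would establish first is that the $1$-form $u$ detects exactly this component: using (\ref{4}), (\ref{5}) together with the orthogonal decomposition (\ref{4.17}), one checks that $u$ annihilates each of $E$, $\psi=\tilde{J}E$ and $\mu_{0}$, hence $u|_{\mathring{D}}=0$, while $u(\zeta)=q\neq0$ by (\ref{4.23}). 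Thus $\mathring{D}=\ker\big(u|_{T\acute{N}}\big)$, and integrability is equivalent to the single condition $u([U,V])=0$.

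Next I would compute $u([U,V])$. From (\ref{10}) and the symmetry of $B$ the induced connection $\nabla$ is torsion-free, so $[U,V]=\nabla_{U}V-\nabla_{V}U$ and therefore $u([U,V])=u(\nabla_{U}V)-u(\nabla_{V}U)$. Applying (\ref{4.27}), namely $(\nabla_{U}u)V=-B(U,\varphi V)-u(V)\tau(U)$, and using that $u$ vanishes identically on sections of $\mathring{D}$ (so $U(u(V))=0$ and $u(V)=0$), yields $u(\nabla_{U}V)=B(U,\varphi V)=B(U,\tilde{J}V)$, the last equality because $\tilde{J}V=\varphi V$ when $u(V)=0$, cf. (\ref{27}). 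Hence $\mathring{D}$ is integrable if and only if $B(U,\tilde{J}V)=B(V,\tilde{J}U)$ for all $U,V\in\Gamma(\mathring{D})$.

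It then remains to recast this symmetric condition in the stated form, where I would invoke $\tilde{J}^{2}=p\tilde{J}+qI$ from (\ref{3}). The right-hand side of (\ref{4.42}) equals $B(V,p\tilde{J}U+qU)=B(V,\tilde{J}^{2}U)=B(V,\tilde{J}(\tilde{J}U))$, so (\ref{4.42}) reads $B(\tilde{J}U,\tilde{J}V)=B(V,\tilde{J}(\tilde{J}U))$, which is precisely the relation $B(X,\tilde{J}V)=B(V,\tilde{J}X)$ evaluated at $X=\tilde{J}U$. Since $\tilde{J}$ is invertible with $\tilde{J}^{-1}=q^{-1}(\tilde{J}-pI)$ and $\mathring{D}$ is $\tilde{J}$-invariant (as one checks from the definition of $\mathring{D}$ and $\tilde{J}^{2}=p\tilde{J}+qI$), letting $U$ range over $\Gamma(\mathring{D})$ makes $X=\tilde{J}U$ range over all of $\Gamma(\mathring{D})$. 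Therefore (\ref{4.42}) for all $U,V$ is equivalent to $B(U,\tilde{J}V)=B(V,\tilde{J}U)$ for all $U,V$, which is exactly the integrability criterion from the previous paragraph.

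The main obstacle I anticipate is the first step: recognizing and verifying that the single $1$-form $u$ cuts out $\mathring{D}$ (the identity $u|_{\mathring{D}}=0$), since this is what converts an a priori bundle-valued integrability condition into one scalar equation. Once this is in place, the covariant-derivative computation via (\ref{4.27}) is routine, and the passage to the displayed form (\ref{4.42}) is pure algebra driven by the metallic identity $\tilde{J}^{2}=p\tilde{J}+qI$ and the invertibility of $\tilde{J}$. One should also be careful to record that $\mathring{D}$ is $\tilde{J}$-invariant, since this is what legitimizes the substitution $X=\tilde{J}U$ in the final reformulation.
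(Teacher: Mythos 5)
Your proof is correct and rests on the same mechanism as the paper's: the obstruction to integrability of $\mathring{D}$ is the component of the bracket along $\zeta=\tilde{J}N$, which is detected by pairing with $\tilde{J}E$, and the locally metallic condition turns that pairing into second-fundamental-form terms. The packaging differs. The paper computes $\tilde{g}([\tilde{J}U,V],\tilde{J}E)$ directly with the ambient connection and reads off (\ref{4.42}) in one pass; you instead work with the induced connection through (\ref{4.27}) to get $u([U,V])=B(U,\tilde{J}V)-B(V,\tilde{J}U)$, so your intermediate criterion is the symmetric condition $B(U,\tilde{J}V)=B(V,\tilde{J}U)$, from which (\ref{4.42}) follows by the reindexing $U\mapsto\tilde{J}U$ and $\tilde{J}^{2}=p\tilde{J}+qI$. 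Your version also makes explicit two points the paper leaves tacit: that $u=\tilde{g}(\cdot,\tilde{J}E)$ vanishes on all of $\mathring{D}$ while $u(\zeta)=q\neq 0$ (so the vanishing of this single scalar really is equivalent to $[U,V]\in\Gamma(\mathring{D})$, by (\ref{4.17}) and (\ref{4.23})), and that $\tilde{J}$ restricts to an automorphism of $\mathring{D}$ with inverse $q^{-1}(\tilde{J}-pI)$ (which is what legitimizes both the paper's testing of brackets of the form $[\tilde{J}U,V]$ and your final substitution). Both added checks are correct, so nothing is lost and a small gap in the paper's exposition is filled.
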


\begin{proof}
Taking $U,V\in \Gamma (\mathring{D})$, \ we get $\tilde{J}U\in \Gamma (%
\mathring{D})$. Then $\mathring{D}$ is integrable iff 
\begin{eqnarray*}
\tilde{g}([\tilde{J}U,V],\tilde{J}E) &=&\tilde{g}(\tilde{\nabla}_{\tilde{J}%
U}V,\tilde{J}E)-\tilde{g}(\tilde{\nabla}_{V}\tilde{J}U,\tilde{J}E) \\
&=&\tilde{g}(\tilde{J}\tilde{\nabla}_{\tilde{J}U}V,E)-\tilde{g}(\tilde{J}%
\tilde{\nabla}_{V}U,\tilde{J}E) \\
&=&\tilde{g}(-((\tilde{\nabla}_{\tilde{J}U}\tilde{J})V+\tilde{\nabla}_{%
\tilde{J}U}\tilde{J}V,E) \\
&&-p\tilde{g}(\tilde{\nabla}_{V}U,\tilde{J}E)-q\tilde{g}(\tilde{\nabla}%
_{V}U,E) \\
&=&\tilde{g}(\tilde{\nabla}_{\tilde{J}U}\tilde{J}V,E)-p\tilde{g}(\tilde{%
\nabla}_{V}U,\tilde{J}E)-q\tilde{g}(\tilde{\nabla}_{V}U,E) \\
&=&B(\tilde{J}U,\tilde{J}V)-pB(V,\tilde{J}U)-qB(V,U),
\end{eqnarray*}%
which gives (\ref{4.42}).
\end{proof}

\begin{theorem}
Let $\acute{N}$ be a screen semi-invariant lightlike hypersurface of a
locally metallic semi-Riemannian manifold $(\tilde{N},\tilde{g},\tilde{J})$.
Then $\mathring{D}$ is parallel iff $\mathring{D}$ is totally geodesic on $%
\acute{N}$.
\end{theorem}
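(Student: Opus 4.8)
The plan is to reduce both properties to the vanishing of a single bilinear form, exploiting that $\mathring{D}^{\prime}=\tilde{J}(ltr(T\acute{N}))$ is one–dimensional, spanned by $\zeta=\tilde{J}N$, with $\psi=\tilde{J}E$ playing the role of a separating direction. First I would record that $\psi$ is null and $g$–orthogonal to all of $\mathring{D}$: since $\psi\in S(T\acute{N})$ and $E\in Rad(T\acute{N})$ we have $g(E,\psi)=0$, while $g(X,\psi)=0$ for $X\in\Gamma(\mu_{0})$ by the orthogonal splitting (\ref{4.17}), and $g(\psi,\psi)=0$ follows from (\ref{5}) and the nullity of $E$. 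On the other hand, using (\ref{5}), $\tilde{g}(N,E)=1$ (Theorem \ref{theo-1}) and $\tilde{g}(N,\psi)=0$, one gets $g(\zeta,\psi)=q\neq 0$. Hence a tangent vector $W$ lies in $\mathring{D}$ if and only if $g(W,\psi)=0$, and the $\mathring{D}^{\prime}$–component of any $\nabla_{U}V$ equals $\tfrac{1}{q}g(\nabla_{U}V,\psi)\,\zeta$.

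Next I would compute this component explicitly. For $U\in\Gamma(T\acute{N})$ and $V\in\Gamma(\mathring{D})$, I pass to $\tilde{\nabla}$ by (\ref{10}) (killing the $B(U,V)N$ term against $\psi$), use that $\tilde{\nabla}$ is metric with $\tilde{g}(V,\psi)=0$, then $\tilde{\nabla}\tilde{J}=0$ together with (\ref{14}) and (\ref{13}) to get $\tilde{\nabla}_{U}\psi=\tilde{J}\tilde{\nabla}_{U}E=-\tilde{J}A_{E}^{\ast}U-\tau(U)\psi$, and finally the compatibility (\ref{4}). This should yield
\[
g(\nabla_{U}V,\psi)=\tilde{g}(\tilde{J}V,A_{E}^{\ast}U).
\]
This formula is the heart of the argument: it holds for every $U\in\Gamma(T\acute{N})$, and by (\ref{18*}) its right–hand side is a symmetric, $B$–type expression.

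The equivalence then reads off as follows. By definition $\mathring{D}$ is totally geodesic exactly when its second fundamental form vanishes, i.e. when $g(\nabla_{U}V,\psi)=0$ for all $U,V\in\Gamma(\mathring{D})$; and $\mathring{D}$ is parallel exactly when the same holds for all $U\in\Gamma(T\acute{N})$, $V\in\Gamma(\mathring{D})$. The forward implication is immediate by restricting $U$ to $\mathring{D}$. For the converse I would split $U=QU+RU$ with $RU\in\Gamma(\mathring{D}^{\prime})$; the term $\nabla_{QU}V$ stays in $\mathring{D}$ by total geodesy, so everything comes down to the single transversal direction $\nabla_{\zeta}V$.

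The step I expect to be the main obstacle is precisely this last one: showing $g(\nabla_{\zeta}V,\psi)=\tilde{g}(\tilde{J}V,A_{E}^{\ast}\zeta)=0$ for $V\in\Gamma(\mathring{D})$, since $\zeta$ lies outside $\mathring{D}$ and is not directly controlled by the total geodesy hypothesis. I would attack it by re–expressing $A_{E}^{\ast}\zeta$ through (\ref{18*}) and converting the resulting value into a $C$–term via (\ref{4.30}), so that the $\zeta$–direction contribution is matched against the component of the second fundamental form that total geodesy has already annihilated. Identifying this transversal contribution with the (vanishing) second fundamental form of $\mathring{D}$ is what closes the equivalence.
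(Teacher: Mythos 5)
Your reduction of both properties to the vanishing of $g(\nabla _{U}V,\psi )$, and your computation $g(\nabla _{U}V,\psi )=\tilde{g}(A_{E}^{\ast }U,\tilde{J}V)=B(U,\tilde{J}V)$, coincide with the identity at the heart of the paper's proof (there it is obtained by moving $\tilde{J}$ across the metric and using $\tilde{\nabla}\tilde{J}=0$, landing on $B(U,\tilde{J}V)$ directly). The forward implication (parallel $\Rightarrow$ totally geodesic) is then immediate, as you say. The genuine gap is the converse, which you have correctly isolated but not closed: you must show $g(\nabla _{\zeta }V,\psi )=B(\zeta ,\tilde{J}V)=0$ for $V\in \Gamma (\mathring{D})$ knowing only that $B$ vanishes on $\mathring{D}\times \mathring{D}$. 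Your proposed route through (\ref{4.30}) turns this into $B(\tilde{J}V,\zeta )=-C(\tilde{J}V,\psi )$, i.e.\ into the $\zeta $-component of $A_{N}\tilde{J}V$; that is a $C$-type quantity which total geodesy of $\mathring{D}$ (a condition on $B$ with both arguments in $\mathring{D}$) simply does not control. So the detour through (\ref{4.30}) restates the obstruction rather than removing it, and the equivalence remains unproved in your write-up.

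You should also know that the paper's own proof does not confront this point: it consists solely of the chain $0=g(\nabla _{U}V,\psi )=\cdots =B(U,\tilde{J}V)$ with no attention to whether $U$ ranges over $\Gamma (T\acute{N})$ (parallelism) or only over $\Gamma (\mathring{D})$ (total geodesy); in effect both properties are identified with the single condition $B(U,\tilde{J}V)=0$ and the equivalence is declared. Your attempt therefore exposes a real weakness of the statement rather than missing a known trick. To obtain a correct assertion one must either read ``totally geodesic'' as ``$B(U,\tilde{J}V)=0$ for all $U\in \Gamma (T\acute{N})$, $V\in \Gamma (\mathring{D})$'' (after which your computation makes the theorem tautological), or add a hypothesis that kills the residual term $C(\tilde{J}V,\psi )$ --- for instance the mixed geodesic condition $B(\cdot ,\zeta )=0$ of (\ref{4.35}), which by (\ref{4.30}) is exactly the vanishing of the missing $\zeta $-direction contribution.
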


\begin{proof}
From the definition of the distribution $\mathring{D}$ we know that $%
\mathring{D}$ is parallel if and only if%
\begin{equation*}
g(\nabla _{U}V,\psi )=0.
\end{equation*}%
So, we have%
\begin{eqnarray*}
0 &=&g(\nabla _{U}V,\psi ) \\
&=&\tilde{g}(\nabla _{U}V,\psi ) \\
&=&\tilde{g}(\nabla _{U}V,\tilde{J}E) \\
&=&\tilde{g}(\tilde{J}\nabla _{U}V,E) \\
&=&\tilde{g}(-((\tilde{\nabla}_{U}\tilde{J})V+\tilde{\nabla}_{U}\tilde{J}V,E)
\\
&=&\tilde{g}(\tilde{\nabla}_{U}\tilde{J}V,E)=B(U,\tilde{J}V).
\end{eqnarray*}%
Thus we prove the assertion.
\end{proof}

\begin{definition}
Let $\acute{N}$ be a lightlike hypersurface of a locally metallic
semi-Riemannian manifold $(\tilde{N},\tilde{g},\tilde{J})$. If for every $%
U,V\in \Gamma (T\acute{N})$%
\begin{equation}
C(U,\omega V)=\phi B(U,V),  \label{4.43}
\end{equation}%
or $A_{N}=\phi A_{E}^{\ast }$, then $\acute{N}$ is called screen conformal
lightlike hypersurface, where $\phi $ is a non-vanishing smooth function 
\cite{DS}.
\end{definition}

From (\ref{4.30}) with (\ref{4.43}), we find%
\begin{equation}
B(U,\zeta +\phi \psi )=0.  \label{4.44}
\end{equation}

\begin{theorem}
Let $\acute{N}$ be a screen conformal screen semi-invariant ightlike
hypersurface of a locally metallic semi-Riemannian manifold $(\tilde{N},%
\tilde{g},\tilde{J})$. If $\acute{N}$ or screen distribution $S(T\acute{N})$
is totally umbilical then $\acute{N}$ is totally geodesic in $\tilde{N}.$
\end{theorem}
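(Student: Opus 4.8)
The plan is to reduce everything to the single algebraic constraint (\ref{4.44}), namely $B(U,\zeta +\phi \psi )=0$ for all $U\in \Gamma (T\acute{N})$, and then to feed into it whichever umbilicity hypothesis is available. First I would record the inner products that the screen semi-invariant condition forces on the lightlike fields $\psi =\tilde{J}E$ and $\zeta =\tilde{J}N$. Since $\psi ,\zeta \in \Gamma (S(T\acute{N}))$, Theorem \ref{theo-1} gives $\tilde{g}(N,\psi )=\tilde{g}(N,\zeta )=0$, while $E\in \Gamma (RadT\acute{N})$ is orthogonal to $T\acute{N}$ so that $\tilde{g}(E,\psi )=0$. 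Applying the compatibility identity (\ref{5}) then yields
\begin{equation*}
g(\psi ,\psi )=g(\zeta ,\zeta )=0,\qquad g(\zeta ,\psi )=q.
\end{equation*}
The non-degeneracy of this last pairing is the engine of the whole argument: $\zeta +\phi \psi $ fails to be orthogonal to $\psi $.

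Next I would treat the case in which $\acute{N}$ itself is totally umbilical, so that $B(U,V)=\rho\, g(U,V)$ for a smooth function $\rho $. Substituting $U=\psi $ into (\ref{4.44}) and using the values above,
\begin{equation*}
0=B(\psi ,\zeta +\phi \psi )=\rho \big( g(\psi ,\zeta )+\phi\, g(\psi ,\psi )\big) =\rho q.
\end{equation*}
As $q$ is a positive integer this forces $\rho =0$, hence $B\equiv 0$ and $\acute{N}$ is totally geodesic.

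For the case in which the screen distribution $S(T\acute{N})$ is totally umbilical, I would write $C(X,Y)=\gamma\, g(X,Y)$ on $\Gamma (S(T\acute{N}))$ and combine it with the screen conformal relation (\ref{4.43}), $C(U,\omega V)=\phi B(U,V)$, to get $B(X,Y)=(\gamma /\phi )\,g(X,Y)$ for $X,Y\in \Gamma (S(T\acute{N}))$. Putting $U=\psi $ in (\ref{4.44}) again gives $(\gamma /\phi )\,q=0$, so $\gamma =0$ and $B$ vanishes on the screen. Finally, since (\ref{14}) gives $B(U,E)=0$ and $T\acute{N}=S(T\acute{N})\bot RadT\acute{N}$ with $RadT\acute{N}$ spanned by $E$, decomposing each argument of the symmetric form $B$ shows that $B$ vanishes on all of $T\acute{N}$; thus $\acute{N}$ is totally geodesic in $\tilde{N}$.

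I expect the only genuine obstacle to be the bookkeeping in this second case: one must check that the umbilicity of the screen, transported through the non-vanishing factor $\phi $, controls $B$ on the whole screen rather than on a proper subbundle, and that the remaining radical direction is harmless thanks to (\ref{14}). Everything else rests on the short computation $g(\zeta ,\psi )=q\neq 0$ inserted into (\ref{4.44}).
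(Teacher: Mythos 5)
Your proposal is correct and follows essentially the same route as the paper: both arguments hinge on substituting $U=\psi$ into the screen-conformal identity $B(U,\zeta+\phi\psi)=0$ and exploiting the computation $g(\psi,\zeta)=q\neq 0$, $g(\psi,\psi)=g(\zeta,\zeta)=0$ to kill the umbilicity factor. The only difference is that you spell out the second case (umbilical screen distribution), which the paper dismisses in one line, and your bookkeeping there --- passing from $C=\gamma g$ through $C(U,\omega V)=\phi B(U,V)$ to $B=(\gamma/\phi)g$ on the screen and then using $B(U,E)=0$ to cover the radical direction --- is sound.
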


\begin{proof}
Let $\acute{N}$ be totally umbilical. Then we know that 
\begin{equation}
B(U,V)=\lambda g(U,V).  \label{4.45}
\end{equation}%
From (\ref{4.44}), we get\ 
\begin{equation}
\lambda g(U,\zeta +\phi \psi )=0.  \label{4.46}
\end{equation}%
Putting $U=\psi $ in (\ref{4.46}) we have $\lambda =0$ and $U=\zeta $ in (%
\ref{4.46}) we have $\lambda \phi =0.$ So we get $B=0=C.$

Now, if $S(T\acute{N})$ is totally umbilical we arrive at $B=0=C.$
\end{proof}

\end{document}